\theoremstyle{plain}
\newtheorem{theorem}{Theorem}[section]
\newtheorem{lemma}[theorem]{Lemma}
\newtheorem{proposition}[theorem]{Proposition}
\newtheorem{corollary}[theorem]{Corollary}
\theoremstyle{definition}
\newtheorem{remark}[theorem]{Remark}
\newtheorem{example}[theorem]{Example}
\begin{document}
\setcounter{section}{0}

\title{Commutative rings with one-absorbing factorization}

\author[A. El Khalfi]{Abdelhaq El Khalfi}
\address{El Khalfi Abdelhaq\\Modelling and Mathematical Structures Laboratory \\ Department of Mathematics, Faculty of Science and Technology of Fez, Box 2202,
University S.M. Ben Abdellah Fez, Morocco.
$$ E-mail\ address:\ abdelhaq.elkhalfi@usmba.ac.ma$$}

\author[M. Issoual]{Mohammed Issoual}
\address{Mohammed Issoual\\Modelling and Mathematical Structures Laboratory \\ Department of Mathematics, Faculty of Science and Technology of Fez, Box 2202,
University S.M. Ben Abdellah Fez, Morocco.
$$ E-mail\ address:\ issoual2@yahoo.fr$$}

\author[N. Mahdou]{Najib Mahdou}
\address{Najib Mahdou\\Modelling and Mathematical Structures Laboratory \\ Department of Mathematics, Faculty of Science and Technology of Fez, Box 2202,
University S.M. Ben Abdellah Fez, Morocco.
$$E-mail\ address:\ mahdou@hotmail.com$$}

\author[A. Reinhart]{Andreas Reinhart}
\address{Andreas Reinhart\\Institut f\"ur Mathematik und wissenschaftliches Rechnen, Karl-Franzens-Universit\"at Graz, NAWI Graz, Heinrichstra{\ss}e 36, 8010 Graz, Austria.
$$E-mail\ address:\ andreas.reinhart@uni\textnormal{-}graz.at$$}

\subjclass[2010]{Primary 13B99; Secondary 13A15, 13G05, 13B21.}
\keywords{$OA$-ideal, $OAF$-ring, trivial ring extension.}

\begin{abstract}
Let $R$ be a commutative ring with nonzero identity. A. Yassine et al. defined in the paper (Yassine, Nikmehr and Nikandish, 2020),  the concept of $1$-absorbing prime ideals as follows: a proper ideal $I$ of $R$ is said to be a $1$-absorbing prime ideal if whenever $xyz\in I$ for some nonunit elements $x,y,z\in R$, then either $xy\in I$ or $z\in\ I$. We use the concept of $1$-absorbing prime ideals to study those commutative rings in which every proper ideal is a product of $1$-absorbing prime ideals (we call them $OAF$-rings). Any $OAF$-ring has dimension at most one and local $OAF$-domains $(D,M)$ are atomic such that $M^2$ is universal.
\end{abstract}

\maketitle

\section{Introduction}

Throughout this paper, all rings are commutative with nonzero identity and all modules are unital. Let $\mathbb{N}$ denote the set of positive integers. For $m\in\mathbb{N}$, let $[1,m]=\{n\in\mathbb{N}\mid 1\leq n\leq m\}$. Let $R$ be a ring. An ideal $I$ of $R$ is said to be {\it proper} if $I\neq R$. The {\it radical} of $I$ is denoted by $\sqrt{I}=\{x\in R\mid x^n\in I$ for some $n\in\mathbb{N}\}$. We denote by ${\rm Min}(I)$ the {\it set of minimal prime ideals} over the ideal $I$. The concept of prime ideals plays an important role in ideal theory and there are many ways to generalize it.

In \cite{B} Badawi introduced and studied the concept of $2$-absorbing ideals which is a generalization of prime ideals. An ideal $I$ of $R$ is a {\it $2$-absorbing ideal} if whenever $a,b,c\in R$ and $abc\in I$, then $ab\in I$ or $ac\in I$ or $bc\in I$. In this case $\sqrt{I}=P$ is a prime ideal with $P^2\subseteq I$ or $\sqrt{I}=P_1\cap P_2$ where $P_1,P_2$ are incomparable prime ideals with $P_1P_2\subseteq I$, cf. \cite[Theorem 2.4]{B}. In \cite{AB} Anderson and Badawi introduced the concept of $n$-absorbing ideals as a generalization of prime ideals where $n$ is a positive integer. An ideal $I$ of $R$ is called an {\it $n$-absorbing ideal} of $R$, if whenever $a_1,a_2,\dots,a_{n+1}\in R$ and $\prod_{i=1}^{n+1} a_i\in I$, then there are $n$ of the $a_i$'s whose product is in $I$. In this case, due to Choi and Walker \cite[Theorem 1]{CW}, $(\sqrt {I})^n\subseteq I$.

In \cite{MAD} M. Mukhtar et al. studied the commutative rings whose ideals have a $TA$-factorization. A proper ideal is called a {\it $TA$-ideal} if it is a $2$-absorbing ideal. By a {\it $TA$-factorization} of a proper ideal $I$ we mean an expression of $I$ as a product $\prod_{i=1}^r J_i$ of $TA$-ideals. M. Mukhtar et al. prove that any $TAF$-ring has dimension at most one and the local $TAF$-domains are atomic pseudo-valuations domains. Recently in \cite{ADK}, M. T. Ahmed et al. studied commutative rings whose proper ideals have an $n$-absorbing factorization. Let $I$ be a proper ideal of $R$. By an {\it $n$-absorbing factorization} of $I$ we mean an expression of $I$ as a product $\prod_{i=1}^r I_i$ of proper $n$-absorbing ideals of $R$. M. T. Ahmed et al. called $AF$-$\textrm{dim}(R)$ ({\it absorbing factorization dimension}) the minimum positive integer $n$ such that every ideal of $R$ has an $n$-absorbing factorization. If no such $n$ exists, set $AF$-$\textrm{dim}(R)=\infty$. An {\it $FAF$-ring} ({\it finite absorbing factorization ring}) is a ring such that $AF$-$\textrm{dim}(R)<\infty$. Recall that a {\it general $ZPI$-ring} is a ring whose proper ideals can be written as a product of prime ideals. Therefore, $AF$-$\textrm{dim}(R)$ measures, in some sense, how far $R$ is from being a general $ZPI$-ring, cf. \cite[Proposition 3]{ADK}. By $\dim(R)$ we denote the {\it Krull dimension} of $R$.

In \cite{YNN}, A. Yassine et al. introduced the concept of a $1$-absorbing prime ideal which is a generalization of a prime ideal. A proper ideal $I$ of $R$ is a {\it $1$-absorbing prime ideal} (our abbreviation {\it $OA$-ideal}) if whenever we take nonunit elements $a,b,c\in R$ with $abc\in I$, then $ab\in I$ or $c\in I$. In this case $\sqrt{I}=P$ is a prime ideal, cf. \cite[Theorem 2.3]{YNN}. And if $R$ is a ring in which exists an $OA$-ideal that is not prime, then $R$ is a local ring, that is a ring with one maximal ideal.

Let $I$ be a proper ideal of $R$. By an {\it $OA$-factorization} of $I$ we mean an expression of $I$ as a product $\prod_{i=1}^n J_i$ of $OA$-ideals. The aim of this note is to study the commutative rings whose proper ideals (resp., proper principal ideals, resp., proper $2$-generated ideals) have an $OA$-factorization.

We call $R$ a {\it $1$-absorbing prime factorization ring} ({\it $OAF$-ring}) if every proper ideal has an $OA$-factorization. An {\it $OAF$-domain} is a domain which is an $OAF$-ring. Our paper consists of five sections (including the introduction).

In the next section, we characterize $OA$-ideals (Lemma~\ref{lem1}) and we prove that if $I$ is an $OA$-ideal, then $I$ is a primary ideal. We also show that the $OAF$-ring property is stable under factor ring (resp., fraction ring) formation (Propositions~\ref{prop1} and~\ref{prop2}). Furthermore, we investigate $OAF$-rings with respect to direct products (Corollary~\ref{cor1}) and polynomial ring extensions (Corollary~\ref{cor2}). We prove that the general $ZPI$-rings are exactly the arithmetical $OAF$-rings (Theorem~\ref{thm1}).

The third section consists of a collection of preparational results which will be of major importance in the fourth section. For instance, we show that the Krull dimension of an $OAF$-ring is at most one (Theorem~\ref{thm2}).

The fourth section contains the main results of our paper. Among other results, we provide characterizations of $OAF$-rings (Theorem~\ref{thm4}), rings whose proper principal ideals have an $OA$-factorization (Corollary~\ref{cor3}) and rings whose proper (principal) ideals are $OA$-ideals (Proposition~\ref{prop6}).

In the last section, we study the transfer of the various $OA$-factorization properties to the trivial ring extension.

\section{Characterization of $OA$-ideals and simple facts}

We start with a characterization of $OA$-ideals. Recall that a ring $R$ is a {\it $Q$-ring} (cf. \cite{AM}) if every proper ideal of $R$ is a product of primary ideals.

\begin{lemma}\label{lem1} Let $R$ be a ring with Jacobson radical $M$ and $I$ be an ideal of $R$.
\begin{enumerate}
\item[\textnormal{(1)}] If $R$ is not local, then $I$ is an $OA$-ideal if and only if $I$ is a prime ideal.
\item[\textnormal{(2)}] If $R$ is local, then $I$ is an $OA$-ideal if and only if $I$ is a prime ideal or $M^2\subseteq I\subseteq M$.
\item[\textnormal{(3)}] Every $OA$-ideal is a primary $TA$-ideal. In particular, every $OAF$-ring is both a $Q$-ring and a $TAF$-ring.
\end{enumerate}
\end{lemma}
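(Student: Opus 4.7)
The plan is to treat the three parts in order, with all the work concentrated in the non-trivial direction of part (2).

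Part (1): The ``$\Leftarrow$'' implication is immediate, since in a prime ideal $I$ the hypothesis $abc \in I$ forces some factor into $I$ and hence $ab \in I$ or $c \in I$. The ``$\Rightarrow$'' implication is the contrapositive of the fact (attributed to \cite{YNN} and already noted in the introduction) that a ring admitting a non-prime $OA$-ideal must be local; so when $R$ is not local every $OA$-ideal is prime.

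Part (2): Primes are $OA$ by the same reasoning as in (1), and if $M^2 \subseteq I \subseteq M$ then any three nonunits $a, b, c$ all lie in $M$, forcing $ab \in M^2 \subseteq I$; this settles ``$\Leftarrow$''. For the converse, assume $I$ is $OA$, proper and not prime, and suppose for contradiction that $xy \notin I$ for some $x, y \in M$. Since $I$ is not prime, pick $a, b \notin I$ with $ab \in I$; properness forces $a, b$ to be nonunits (otherwise the other would be obtained from $ab$ by multiplying with a unit, putting it into $I$). The key is to apply the $OA$-condition twice. First, the triple $(xa, y, b)$ consists of nonunits (in a local ring, $xa \in M$) and its product equals $(xy)(ab) \in I$; since $b \notin I$ we deduce $xay \in I$. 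Now apply the $OA$-property to the triple $(x, y, a)$: its product $xya$ lies in $I$ by the previous step, yet neither $xy \in I$ nor $a \in I$ holds, a contradiction. Hence $M^2 \subseteq I$. The only delicate point is spotting the triple $(xa, y, b)$, which hinges on the bookkeeping identity $(xa)(y)(b) = (xy)(ab)$.

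Part (3): Using (1) and (2), an $OA$-ideal $I$ is either prime (hence automatically primary and $2$-absorbing) or $R$ is local and $M^2 \subseteq I \subseteq M$. In the latter case $\sqrt{I} = M$ is maximal, so $I$ is $M$-primary. To check the $2$-absorbing condition, suppose $abc \in I$: if one of $a, b, c$ is a unit, multiplying $abc$ by its inverse shows that the product of the remaining two lies in $I$; if all three are nonunits, then $ab \in M^2 \subseteq I$. Either way one of the pair-products lies in $I$. The ``in particular'' assertion is then immediate, since any $OA$-factorization in an $OAF$-ring is simultaneously a factorization into primary ideals and a factorization into $TA$-ideals, making $R$ both a $Q$-ring and a $TAF$-ring.
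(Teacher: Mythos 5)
Your proposal is correct and follows essentially the same route as the paper: part (1) via the cited result that a non-prime $OA$-ideal forces $R$ to be local, part (2) by applying the $1$-absorbing condition twice to the product $xyab$ using a pair $a,b\notin I$ with $ab\in I$ (your grouping $(xa,y,b)$ and contradiction phrasing are only cosmetic variants of the paper's direct argument with $(xy,a,b)$), and part (3) by the same dichotomy plus the unit/nonunit case split for the $2$-absorbing property.
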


\begin{proof} (1) This follows from \cite[Theorem 2.4]{YNN}.

(2) Let $R$ be local. Then $M$ is the maximal ideal of $R$.

($\Rightarrow$) Let $I$ be an $OA$-ideal such that $I$ is not a prime ideal. Since $I$ is proper, we infer that $I\subseteq M$. Since $I$ is not prime, there are $a,b\in M\setminus I$ such that $ab\in I$. To prove that $M^2\subseteq I$, it suffices to show that $xy\in I$ for all $x,y\in M$. Let $x,y\in M$. Then $xyab\in I$. Since $xy,a,b\in M$, $b\not\in I$ and $I$ is an $OA$-ideal, it follows that $xya\in I$. Again, since $x,y,a\in M$, $a\not\in I$ and $I$ is an $OA$-ideal, we have that $xy\in I$.

($\Leftarrow$) Clearly, if $I$ is a prime ideal, then $I$ is an $OA$-ideal. Now let $M^2\subseteq I\subseteq M$. Then $I$ is proper. Let $a,b,c\in M$ be such that $abc\in I$. Then $ab\in M^2\subseteq I$. Therefore, $I$ is an $OA$-ideal.

(3) Let $I$ be an $OA$-ideal. It is an immediate consequence of (1) and (2) that $I$ is a primary ideal. Now let $a,b,c\in R$ be such that $abc\in I$. We have to show that $ab\in I$ or $ac\in I$ or $bc\in I$.

First let $a$ or $b$ or $c$ be a unit of $R$. Without restriction let $a$ be unit of $R$. Since $abc\in I$, we infer that $bc\in I$.

Now let $a$, $b$ and $c$ be nonunits. Then $ab\in I$ or $c\in I$. If $c\in I$, then $ac\in I$. The in particular statement is clear.
\end{proof}

\begin{proposition}\label{prop1} Let $R$ be an $OAF$-ring and $I$ be a proper ideal of $R$. Then $R/I$ is an $OAF$-ring.
\end{proposition}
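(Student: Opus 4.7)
The plan is to lift a proper ideal of $R/I$ to an ideal of $R$, factor it using the $OAF$ hypothesis, and then push the factorization back down, exploiting the crucial fact that the factors automatically contain $I$.

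Concretely, I would fix a proper ideal of $R/I$ and write it uniquely in the form $J/I$ for some ideal $J$ of $R$ with $I\subseteq J\subsetneq R$. Since $R$ is an $OAF$-ring, $J$ has an $OA$-factorization $J=\prod_{i=1}^n J_i$ with each $J_i$ a proper $OA$-ideal of $R$. The key (and free) observation is that any finite product of ideals is contained in each of its factors, so
\[
I\subseteq J=\prod_{i=1}^n J_i\subseteq J_i\quad\text{for every }i\in[1,n].
\]
Consequently each $J_i/I$ is a well-defined proper ideal of $R/I$, and a straightforward check gives $J/I=\prod_{i=1}^n (J_i/I)$ as ideals of $R/I$.

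The heart of the argument is to verify that each $J_i/I$ is itself an $OA$-ideal of $R/I$. For this I would argue directly from the definition: pick nonunits $\bar a,\bar b,\bar c\in R/I$ with $\bar a\bar b\bar c\in J_i/I$ and lift to representatives $a,b,c\in R$. Because every unit of $R$ maps to a unit of $R/I$, these representatives are automatically nonunits in $R$, and because $I\subseteq J_i$ one has $abc\in J_i$. The $OA$-property of $J_i$ in $R$ then forces $ab\in J_i$ or $c\in J_i$, which gives $\bar a\bar b\in J_i/I$ or $\bar c\in J_i/I$, as required.

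There is no serious obstacle in this proof: the one subtlety one might fear, namely having to replace each $J_i$ by $J_i+I$ (whose $OA$-property is not transparent), is eliminated by the inclusion $I\subseteq\prod_i J_i\subseteq J_i$, so both the factorization and the $OA$-property transfer without any extra bookkeeping.
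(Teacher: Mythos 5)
Your proposal is correct and follows essentially the same route as the paper: lift the ideal to $J\supseteq I$, take an $OA$-factorization $J=\prod_i J_i$, use $I\subseteq J\subseteq J_i$ to push the factorization down to $R/I$, and verify the $OA$-property of each $J_i/I$ by lifting nonunits (which stay nonunits since units of $R$ map to units of $R/I$). The only difference is that you make explicit the containment $I\subseteq J_i$ that the paper uses tacitly.
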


\begin{proof} Let $J$ be a proper ideal of $R$ which contains $I$. Let $J=\prod_{i=1}^m J_i$ be an $OA$-factorization. Then $J/I=\prod_{i=1}^m (J_i/I)$. It suffices to show that $J_i/I$ is an $OA$-ideal for each $i\in [1,m]$. Let $i\in [1,m]$ and let $a,b,c\in R$ be such that $\bar{a},\bar{b},\bar{c}$ are three nonunit elements of $R/I$ and $\bar{a}\bar{b}\bar{c}\in J_i/I$. Clearly, $a,b,c$ are nonunit elements of $R$ and $abc\in J_i$. Since $J_i$ is an $OA$-ideal of $R$, we get that $ab\in J_i$ or $c\in J_i$ which implies that $\bar{a}\bar{b}\in J_i/I$ or $\bar{c}\in J_i/I$. Therefore, $R/I$ is an $OAF$-ring.
\end{proof}

\begin{proposition}\label{prop2} Let $S$ be a multiplicatively closed subset of $R\setminus\boldsymbol{0}$. If $R$ is an $OAF$-ring, then $S^{-1}R$ is an $OAF$-ring. In particular, $R_M$ is an $OAF$-ring for every maximal ideal $M$ of $R$.
\end{proposition}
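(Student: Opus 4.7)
The plan is to reduce the $OA$-factorization problem in $S^{-1}R$ to one in $R$ via the standard correspondence between ideals of a ring and ideals of its localization. Given a proper ideal $J'$ of $S^{-1}R$, I would set $J=J'\cap R$, which is a proper ideal of $R$ satisfying $S^{-1}J=J'$. Using the $OAF$ hypothesis on $R$, I would write $J=\prod_{i=1}^n J_i$ as a product of $OA$-ideals of $R$ and then pass to the localized identity $J'=\prod_{i=1}^n S^{-1}J_i$. The task then reduces to showing that each factor $S^{-1}J_i$ is either equal to the whole ring $S^{-1}R$ (and may be deleted) or is an $OA$-ideal of $S^{-1}R$; since $J'$ is proper, at least one factor survives and yields an honest $OA$-factorization of $J'$.

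The key step is the claim: if $I$ is an $OA$-ideal of $R$ and $S^{-1}I$ is proper in $S^{-1}R$, then $S^{-1}I$ is an $OA$-ideal of $S^{-1}R$. To prove this I would take nonunits $a/s,b/t,c/u$ of $S^{-1}R$ with $(a/s)(b/t)(c/u)\in S^{-1}I$. The first observation is that each of $a,b,c$ must already be a nonunit of $R$, since otherwise the corresponding fraction would be a unit of $S^{-1}R$. The membership relation provides some $v\in S$ with $vabc\in I$.

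Next, I would split on whether $v$ is a unit of $R$. If $v$ is a unit, then $abc\in I$, and applying the $OA$-property of $I$ to the nonunits $a,b,c$ yields $ab\in I$ or $c\in I$. If $v$ is a nonunit, then $va$ is a nonunit (since the product of nonunits is a nonunit in any commutative ring: a one-sided inverse of $va$ would make $v$ a unit), so applying the $OA$-property of $I$ to $(va)\cdot b\cdot c=vabc\in I$ gives $vab\in I$ or $c\in I$; in the first case, $ab/(st)=vab/(vst)\in S^{-1}I$. In every subcase we conclude $(a/s)(b/t)\in S^{-1}I$ or $c/u\in S^{-1}I$, establishing the $OA$-property for $S^{-1}I$.

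The \emph{in particular} statement follows at once by taking $S=R\setminus M$. The main obstacle will be the subtlety that the hypothesis $(a/s)(b/t)(c/u)\in S^{-1}I$ only delivers $vabc\in I$ for some $v\in S$, rather than $abc\in I$ itself; this is what forces the case distinction on $v$ and the substitution of $va$ for $a$ in the $OA$-triple when $v$ is a nonunit. Everything else is routine manipulation of fractions.
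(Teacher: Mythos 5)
Your proposal is correct and takes essentially the same route as the paper: write a proper ideal of $S^{-1}R$ as the extension of a proper ideal of $R$, apply the $OAF$ hypothesis there, and localize the factorization, discarding factors that blow up to the whole ring. The only difference is that the paper simply quotes \cite[Theorem 2.18]{YNN} for the key fact that a proper localization $S^{-1}I$ of an $OA$-ideal $I$ is again an $OA$-ideal, whereas you verify this directly, and your verification (reducing to $vabc\in I$ with $v\in S$ and applying the $OA$-property to the triple $va,b,c$) is sound.
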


\begin{proof} Let $J$ be a proper ideal of $S^{-1}R$. Then $J=S^{-1}I$ for some proper ideal $I$ of $R$ with $I\cap S=\varnothing$. Let $I=\prod_{i=1}^m I_i$ be an $OA$-factorization. Then $J=\prod_{i=1}^m (S^{-1}I_i)$ where each $S^{-1}I_i$ which is proper is an $OA$-ideal by \cite[Theorem 2.18]{YNN}. Thus $S^{-1}R$ is an $OAF$-ring. The in particular statement is clear.
\end{proof}

Let $R$ be a ring. Then $R$ is said to be a {\it $\pi$-ring} if every proper principal ideal of $R$ is a product of prime ideals. We say that $R$ is a {\it unique factorization ring} (in the sense of Fletcher, cf. \cite{AMA}) if every proper principal ideal of $R$ is a product of principal prime ideals. A {\it unique factorization domain} is an integral domain which is a unique factorization ring.

\begin{remark}\label{rem1} Let $R$ be a non local ring.
\begin{enumerate}
\item[\textnormal{(1)}] $R$ is a general $ZPI$-ring if and only if $R$ is an $OAF$-ring.
\item[\textnormal{(2)}] $R$ is a $\pi$-ring if and only if each proper principal ideal of $R$ has an $OA$-factorization.
\item[\textnormal{(3)}] $R$ is a unique factorization ring if and only if each proper principal ideal of $R$ is a product of principal $OA$-ideals.
\end{enumerate}
\end{remark}

\begin{proof} This is an immediate consequence of Lemma~\ref{lem1}(1).
\end{proof}

In the light of the above remark we give the next result.

\begin{corollary}\label{cor1}
Let $R_1$ and $R_2$ be two rings and $R=R_1\times R_2$ be their direct product. The following statements are equivalent.
\begin{enumerate}
\item[\textnormal{(1)}] $R$ is an $OAF$-ring.
\item[\textnormal{(2)}] $R$ is a general $ZPI$-ring.
\item[\textnormal{(3)}] $R_1$ and $R_2$ are general $ZPI$-rings.
\end{enumerate}
\end{corollary}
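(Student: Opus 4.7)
The plan is to reduce everything to Remark~\ref{rem1}(1) together with a classical observation about direct products. First I would note that $R = R_1 \times R_2$ is never local: for any choice of maximal ideals $M_i$ of $R_i$, both $M_1 \times R_2$ and $R_1 \times M_2$ are maximal ideals of $R$, and they are distinct. With this in hand, Remark~\ref{rem1}(1) immediately yields (1) $\Leftrightarrow$ (2), so what remains is the equivalence (2) $\Leftrightarrow$ (3) of the general $ZPI$ property with its factorwise version.

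For (3) $\Rightarrow$ (2), I would exploit the standard facts that every ideal of $R$ has the form $I_1 \times I_2$ and that the prime ideals of $R$ are exactly those of the shape $P \times R_2$ (with $P$ prime in $R_1$) or $R_1 \times Q$ (with $Q$ prime in $R_2$). Writing each $I_i$ as a product of prime ideals in $R_i$ (the empty product giving $R_i$) and then forming the corresponding product of ideals $P_{1,k} \times R_2$ and $R_1 \times P_{2,\ell}$ in $R$ exhibits a prime factorization of $I_1 \times I_2$, using the elementary identity $(A_1 \times B_1)(A_2 \times B_2) = A_1 A_2 \times B_1 B_2$.

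For (2) $\Rightarrow$ (3), given a proper ideal $J_1$ of $R_1$, I would apply the hypothesis to the proper ideal $J_1 \times R_2$ of $R$ to obtain a prime factorization $J_1 \times R_2 = \prod_k K_k$. Each $K_k$ is of the form $P_k \times R_2$ or $R_1 \times Q_k$ with $P_k$ (resp.\ $Q_k$) prime in $R_1$ (resp.\ $R_2$). A coordinatewise comparison then shows that a factor of the second type would force a nontrivial product of proper ideals of $R_2$ to equal $R_2$, which is impossible; hence every factor is of the first type, and projection to the first coordinate exhibits a prime factorization of $J_1$ in $R_1$. The symmetric argument produces prime factorizations in $R_2$.

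I do not anticipate any real obstacle. The only point requiring minor care is the coordinate-projection bookkeeping in (2) $\Rightarrow$ (3), which hinges on the fact that prime ideals are by definition proper.
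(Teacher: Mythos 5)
Your proposal is correct and follows the paper's own route: both arguments hinge on the observation that $R_1\times R_2$ is never local, so Remark~\ref{rem1}(1) gives (1) $\Leftrightarrow$ (2), and then reduce to the fact that a direct product is a general $ZPI$-ring exactly when its factors are. The only difference is that the paper cites this last fact from \cite[Exercise 6(g), page 223]{LM}, whereas you prove it directly via the standard description of ideals and primes of $R_1\times R_2$; your coordinatewise argument is sound.
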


\begin{proof} This follows from Remark~\ref{rem1}(1) and \cite[Exercise 6(g), page 223]{LM}.
\end{proof}

Let $R$ be a ring. Then $R$ is called a {\it von Neumann regular ring} if for each $x\in R$ there is some $y\in R$ with $x=x^2y$. The ring $R$ is von Neumann regular if and only if $R$ is a zero-dimensional reduced ring (see \cite[Theorem 3.1, page 10]{H}).

\begin{corollary}\label{cor2} Let $R$ be a ring. The following statements are equivalent.
\begin{enumerate}
\item[\textnormal{(1)}] $R[X]$ is an $OAF$-ring.
\item[\textnormal{(2)}] $R$ is a Noetherian von Neumann regular ring.
\item[\textnormal{(3)}] $R$ is a finite direct product of fields.
\end{enumerate}
\end{corollary}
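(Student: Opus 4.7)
The plan is to handle $(2) \Leftrightarrow (3)$ by the standard structure of Artinian reduced rings, and to reduce $(1)$ to the classical question of when $R[X]$ is a general $ZPI$-ring via Remark~\ref{rem1}(1). This reduction is available because $R[X]$ is always non-local: the infinitely many maximal ideals of $(R/M)[X]$, for any maximal ideal $M$ of $R$, pull back to distinct maximal ideals of $R[X]$. The equivalence $(2) \Leftrightarrow (3)$ itself follows from the observation that a Noetherian von Neumann regular ring is Noetherian, zero-dimensional and reduced, hence Artinian and reduced, and the structure theorem for Artinian rings then writes it as a finite direct product of Artinian local reduced rings, each of which is a field; the converse implication is immediate.

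For $(3) \Rightarrow (1)$, if $R = F_1 \times \cdots \times F_n$ with $F_i$ fields, then $R[X] \cong F_1[X] \times \cdots \times F_n[X]$ is a finite direct product of PIDs, each of which is a general $ZPI$-ring. An inductive application of Corollary~\ref{cor1} (equivalently, the classical fact that a finite direct product of general $ZPI$-rings is a general $ZPI$-ring) yields that $R[X]$ is general $ZPI$, and since $R[X]$ is non-local, Remark~\ref{rem1}(1) promotes this to the $OAF$-property.

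For $(1) \Rightarrow (3)$, Remark~\ref{rem1}(1) gives that $R[X]$ is a general $ZPI$-ring. Such rings are Noetherian (cf.\ \cite{LM}), so $R$ is Noetherian; combined with Theorem~\ref{thm2} and the identity $\dim R[X] = \dim R + 1$ for Noetherian $R$, this forces $\dim R = 0$. Hence $R$ is Artinian and decomposes as $R = R_1 \times \cdots \times R_k$ with each $R_i$ local Artinian, so $R[X] \cong \prod_{i=1}^k R_i[X]$ with each factor $R_i[X]$ again a general $ZPI$-ring by an iterated use of Corollary~\ref{cor1}. Since $R_i$ has no nontrivial idempotents, neither does $R_i[X]$, and the structure theorem for general $ZPI$-rings (as a finite direct product of Dedekind domains and special principal ideal rings) therefore presents $R_i[X]$ as a single Dedekind domain or a single SPIR. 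SPIRs are local while $R_i[X]$ is not, so $R_i[X]$ is a Dedekind domain, in particular a domain, which forces $R_i$ to be a domain and therefore, being Artinian and local, a field. The main obstacle is the clean invocation of the classical structure theorem for general $ZPI$-rings (both the Noetherian property and the decomposition into Dedekind domains and SPIRs); once those are accepted, the remainder is bookkeeping through idempotents and Krull dimension.
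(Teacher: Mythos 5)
Your proof is correct, and its first step coincides with the paper's: observe that $R[X]$ is never local and invoke Remark~\ref{rem1}(1) to reduce (1) to ``$R[X]$ is a general $ZPI$-ring'' (the paper's non-locality argument is even quicker -- $X$ and $1-X$ are nonunits whose sum is a unit -- but your argument via the infinitely many maximal ideals of $(R/M)[X]$ is equally valid). Where you diverge is in the second half: the paper disposes of the remaining equivalences by citing Anderson's characterization of when $R[X]$ is a general $ZPI$-ring (\cite[Theorem 6 and Corollary 6.1]{A}), together with \cite[Exercise 10, page 225]{LM} and Hilbert's basis theorem, whereas you prove it from scratch using the classical structure theory of general $ZPI$-rings (Noetherian; finite products of Dedekind domains and special principal ideal rings), the dimension bound of Theorem~\ref{thm2} combined with $\dim R[X]=\dim R+1$, the Artinian decomposition of $R$, Corollary~\ref{cor1} iterated over the factors, and the fact that idempotents of $R_i[X]$ lie in $R_i$ so that each indecomposable factor $R_i[X]$ must be a Dedekind domain (it cannot be an SPIR, being non-local), forcing each $R_i$ to be a field. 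The trade-off is clear: the paper's proof is a two-line reduction to the literature, while yours is essentially self-contained modulo the standard structure theorem for general $ZPI$-rings and exhibits explicitly why zero-dimensionality and reducedness of $R$ are forced; both routes are sound, and your handling of $(2)\Leftrightarrow(3)$ via Artinian reduced rings and of $(3)\Rightarrow(1)$ via $R[X]\cong\prod_i F_i[X]$ is the expected one.
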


\begin{proof} Observe that the polynomial ring $R[X]$ is never local, since $X$ and $1-X$ are nonunit elements of $R[X]$, but their sum is a unit. Consequently, $R[X]$ is an $OAF$-ring if and only if $R[X]$ is a general $ZPI$-ring by Remark~\ref{rem1}(1). The rest is now an easy consequence of \cite[Theorem 6 and Corollary 6.1]{A}, \cite[Exercise 10, page 225]{LM} and Hilbert's basis theorem.
\end{proof}

Let $R$ be a ring and $I$ be an ideal of $R$. Then $I$ is called {\it divided} if $I$ is comparable to every ideal of $R$ (or equivalently, $I$ is comparable to every principal ideal of $R$).

\begin{lemma}\label{lem2} Let $R$ be a local ring with maximal ideal $M$ such that $M^2$ is divided. The following statements are equivalent.
\begin{enumerate}
\item[\textnormal{(1)}] Each two principal $OA$-ideals which contain $M^2$ are comparable.
\item[\textnormal{(2)}] For each $OA$-ideal $I$ of $R$, we have that $I$ is a prime ideal or $I=M^2$.
\end{enumerate}
\end{lemma}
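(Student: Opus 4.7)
The plan is to translate both conditions into statements about ideals lying between $M^2$ and $M$, using Lemma~\ref{lem1}(2). First I record the following key observations. Any $OA$-ideal that is prime and contains $M^2$ must equal $M$: indeed, $M^2\subseteq P$ with $P$ prime forces $M\subseteq P$, and $P$ is proper, so $P\subseteq M$. Conversely, by Lemma~\ref{lem1}(2), any ideal $I$ with $M^2\subseteq I\subseteq M$ is automatically an $OA$-ideal. Combining these, (2) is equivalent to the statement that the only ideals $I$ of $R$ with $M^2\subseteq I\subseteq M$ are $M^2$ itself and $M$. Moreover, a principal ideal $xR$ containing $M^2$ is automatically an $OA$-ideal: if $x$ is a unit, $xR=R$, so $xR$ is never an $OA$-ideal (these are proper); otherwise $x\in M$, and then $M^2\subseteq xR\subseteq M$ gives that $xR$ is an $OA$-ideal by Lemma~\ref{lem1}(2).

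For (2) $\Rightarrow$ (1), let $aR$ and $bR$ be two principal $OA$-ideals containing $M^2$. Under the reformulation of (2), each of them equals $M$ or $M^2$, and $M^2\subseteq M$, so $aR$ and $bR$ are comparable.

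For (1) $\Rightarrow$ (2), I argue by contradiction: assume there is an ideal $I$ with $M^2\subsetneq I\subsetneq M$, and pick $a\in I\setminus M^2$ and $b\in M\setminus I$. The critical use of the hypothesis that $M^2$ is divided is the following: since $M^2$ is comparable to both $aR$ and $bR$, and $a,b\notin M^2$ (the latter because $M^2\subseteq I$ and $b\notin I$), we must have $M^2\subseteq aR$ and $M^2\subseteq bR$. Both $aR$ and $bR$ are principal $OA$-ideals containing $M^2$, so (1) yields that they are comparable. Since $aR\subseteq I$ but $b\notin I$, we cannot have $bR\subseteq aR$; hence $aR\subseteq bR$, say $a=br$ with $r\in R$. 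If $r$ is a unit then $aR=bR$, giving $b\in aR\subseteq I$, a contradiction. Otherwise $r\in M$, whence $a=br\in M\cdot M=M^2$, again contradicting the choice of $a$.

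The main obstacle is the (1) $\Rightarrow$ (2) direction, where one must produce a contradiction from the mere existence of an ideal strictly between $M^2$ and $M$. The key trick is to exploit the divided hypothesis on $M^2$ to upgrade ``$a\notin M^2$'' into the much stronger conclusion ``$M^2\subseteq aR$'', which is what allows one to apply (1) to the principal ideals generated by a chosen pair of elements, and then to close the argument via the case split on whether the resulting multiplier is a unit.
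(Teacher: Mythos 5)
Your proposal is correct and follows essentially the same route as the paper: both use Lemma~\ref{lem1}(2) to reduce to ideals between $M^2$ and $M$, and for (1) $\Rightarrow$ (2) both pick $a\in I\setminus M^2$ and $b\in M\setminus I$, use the divided hypothesis to get $M^2\subseteq aR, bR$, apply the comparability hypothesis, and derive the contradiction $a=br\in M^2$ from $r$ being a nonunit. The only cosmetic difference is your upfront reformulation of (2) and the explicit unit/nonunit case split, which the paper leaves implicit.
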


\begin{proof} (1) $\Rightarrow$ (2): Let $I$ be an $OA$-ideal of $R$ such that $I$ is not a prime ideal of $R$. Then $M^2\subseteq I\subset M$ by Lemma~\ref{lem1}(2). Assume that $M^2\subset I$. Let $x\in I\setminus M^2$ and let $y\in M\setminus I$. Then $x,y\not\in M^2$, and thus $M^2\subseteq xR,yR$ (since $M^2$ is divided). It follows that $xR$ and $yR$ are (principal) $OA$-ideals of $R$ by Lemma~\ref{lem1}(2). Since $y\not\in xR$ and $xR$ and $yR$ are comparable, we infer that $xR\subset yR$. Consequently, there is some $z\in M$ such that $x=yz$, and hence $x\in M^2$, a contradiction. Therefore, $I=M^2$.

(2) $\Rightarrow$ (1): This is obvious.
\end{proof}

Let $R$ be a ring. An ideal $I$ of $R$ is called {\it $2$-generated} if $I=xR+yR$ for some (not necessarily distinct) $x,y\in R$. Note that every principal ideal of $R$ is $2$-generated. We say that $R$ is a {\it chained ring} if each two ideals of $R$ are comparable under inclusion. Moreover, $R$ is said to be an {\it arithmetical ring} if $R_M$ is a chained ring for each maximal ideal $M$ of $R$.

\begin{theorem}\label{thm1} Let $R$ be a ring. The following statements are equivalent.
\begin{enumerate}
\item[\textnormal{(1)}] $R$ is a general $ZPI$-ring
\item[\textnormal{(2)}] $R$ is an arithmetical $OAF$-ring.
\item[\textnormal{(3)}] $R$ is an arithmetical ring and each proper principal ideal of $R$ has an $OA$-factorization.
\end{enumerate}
\end{theorem}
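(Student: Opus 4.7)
The plan is to run the cycle $(1)\Rightarrow(2)\Rightarrow(3)\Rightarrow(1)$, with all real work in $(3)\Rightarrow(1)$. For $(1)\Rightarrow(2)$, I would invoke Mori's structure theorem identifying general $ZPI$-rings with finite direct products of Dedekind domains and special principal ideal rings (each of which is arithmetical); since every prime ideal $P$ of $R$ is automatically an $OA$-ideal (if $abc\in P$ with $a,b,c$ nonunit, primeness gives $ab\in P$ or $c\in P$), any prime factorization is in particular an $OA$-factorization, so $R$ is an $OAF$-ring. The implication $(2)\Rightarrow(3)$ is immediate.

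For $(3)\Rightarrow(1)$ I would reduce to a local claim. Fix a maximal ideal $M$ of $R$: by the arithmetical hypothesis $R_M$ is a chained local ring, and by the same argument as in the proof of Proposition~\ref{prop2}, now applied only to principal generators and using \cite[Theorem~2.18]{YNN}, every proper principal ideal of $R_M$ still admits an $OA$-factorization. The core step is to prove that any local chained ring $(S,N)$ with this property is a field, a $DVR$, or a special principal ideal ring. Once this is in hand, the global conclusion follows from the classical characterization of general $ZPI$-rings as finite direct products of Dedekind domains and $SPIR$s, which can be assembled from the local descriptions together with the arithmetical and principal-$OA$-factorization assumptions.

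To prove the local claim I would split on the dichotomy $N=N^2$ versus $N\neq N^2$, using Lemma~\ref{lem1}(2) which restricts $OA$-ideals of $S$ to primes or ideals between $N^2$ and $N$. In the idempotent case $N=N^2$ the available $OA$-ideals collapse to primes; the chained structure together with the $OA$-factorization of any nonzero $aS$ quickly forces $S$ to be a field (every prime $OA$-factor must coincide with the smallest prime containing $a$, whose idempotency then collapses the product). When $N\neq N^2$, picking $u\in N\setminus N^2$ gives $N^2\subseteq uS$ by chainedness, and in fact $N=uS$, since any $v\in N\setminus uS$ would force $u\in vN\subseteq N^2$, a contradiction; so $N$ is principal. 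The main obstacle I anticipate is controlling $\bigcap_{n\geq 1}N^n$: I would take an $OA$-factorization of a putative nonzero element of this intersection and leverage the fact that every non-prime $OA$-factor properly contains $\bigcap_n N^n$ to force either $\bigcap_n N^n=0$ (producing a $DVR$ in the domain case) or $N^n=0$ for some $n$ (producing an $SPIR$). Ruling out the valuation-like pathology in which $\bigcap_n N^n$ contains a nonzero non-nilpotent element despite the principal $OA$-factorization hypothesis is the delicate point, and will require a careful tracking of how the few admissible $OA$-ideals multiply against the infinite divisibility of such an element in a chained ring.
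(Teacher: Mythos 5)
There is a genuine gap in your $(3)\Rightarrow(1)$, and it sits exactly where you wave your hands: the passage from the local descriptions back to the global statement. Knowing that every $R_M$ is a field, a $DVR$ or an $SPIR$ (plus $R$ arithmetical) does \emph{not} yield that $R$ is a general $ZPI$-ring: an almost Dedekind domain that is not Dedekind is arithmetical and has all its localizations $DVR$s, yet it is not a general $ZPI$-ring. So any correct argument must re-use the global hypothesis that proper principal ideals of $R$ (not of $R_M$) have $OA$-factorizations, and even then the assembly is not formal; it is precisely the content of the classical theorems the paper cites. The paper's route makes this explicit: it first shows $R$ is a $\pi$-ring --- in the non-local case this is immediate from Lemma~\ref{lem1}(1) (equivalently Remark~\ref{rem1}(2)), since there $OA$-ideals are just primes; in the local case chainedness gives that $M^2$ is divided and any two principal $OA$-ideals are comparable, so Lemma~\ref{lem2} forces every $OA$-ideal to be prime or equal to $M^2=M\cdot M$, whence every $OA$-factorization of a principal ideal is already a prime factorization --- and then it invokes the nontrivial global machinery (localize, use Levitz \cite[Theorem 3.2]{L} on $2$-generated ideals of the chained ring $R_M$ to get $\dim R\leq 1$, and conclude by Gilmer's theorems on $\pi$-rings and general $ZPI$-rings \cite{GI}). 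Your plan never produces the global $\pi$-ring property and cites no substitute for the $\pi$-ring structure theory, so "assembled from the local descriptions together with the \dots assumptions" is exactly the missing proof.

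Your local claim itself (a chained local ring whose proper principal ideals have $OA$-factorizations is a field, a $DVR$ or an $SPIR$) is true, but your sketch leaves its hardest step open by your own admission: controlling $\bigcap_{n}N^n$ in the non-idempotent case (and, in the idempotent case, "quickly forces $S$ to be a field" hides a real argument, since nonzero primes below $N$ must be excluded). In the paper this kind of analysis is done only later and with different tools (Lemma~\ref{lem4}, Proposition~\ref{prop4}, Theorem~\ref{thm2}); for Theorem~\ref{thm1} it is unnecessary, because Lemma~\ref{lem2} short-circuits it by converting $OA$-factorizations into prime factorizations under chainedness. So the two defects compound: the delicate local classification you defer is avoidable, while the local-to-global step you treat as routine is where the theorem actually lives.
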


\begin{proof} First we show that if $R$ is an arithmetical $\pi$-ring, then $R$ is a general $ZPI$-ring. Let $R$ be an arithmetical $\pi$-ring and let $M$ be a maximal ideal of $R$. It is straightforward to show that $R_M$ is a $\pi$-ring. Moreover, $R_M$ is a chained ring, and hence every $2$-generated ideal of $R_M$ is principal. Therefore, every proper $2$-generated ideal of $R_M$ is a product of prime ideals of $R_M$. Consequently, $R_M$ is a general $ZPI$-ring by \cite[Theorem 3.2]{L}. This implies that $\dim(R_M)\leq 1$ by \cite[page 205]{LM}. We infer that $\dim(R)\leq 1$, and thus $R$ is a general $ZPI$-ring by \cite[Theorems 39.2, 46.7, and 46.11]{GI}.

(1) $\Rightarrow$ (2) $\Rightarrow$ (3): This is obvious.

(3) $\Rightarrow$ (1): It is sufficient to show that $R$ is a $\pi$-ring. If $R$ is not local, then $R$ is a $\pi$-ring by Remark~\ref{rem1}(2). Therefore, we can assume that $R$ is local with maximal ideal $M$. Since $R$ is local, we have that $R$ is a chained ring. Therefore, $M^2$ is divided and each two $OA$-ideals of $R$ are comparable. We infer by Lemma~\ref{lem2} that each $OA$-ideal of $R$ is a product of prime ideals. Now it clearly follows that $R$ is a $\pi$-ring.
\end{proof}

\section{Preparational results}

From Lemma~\ref{lem1}(3), we have that $|{\rm Min}(I)|=1$ for every $OA$-ideal $I$ of $R$. In view of this remark, we obtain the following result.

\begin{proposition}\label{prop3} Let $R$ be a ring and $I$ be a proper ideal of $R$. If $I$ has an $OA$-factorization, then ${\rm Min}(I)$ is finite.
\end{proposition}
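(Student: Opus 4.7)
The plan is to leverage the fact, already noted immediately before the proposition, that every $OA$-ideal has exactly one minimal prime. Starting from an $OA$-factorization $I=\prod_{i=1}^{n}J_{i}$, I would show that ${\rm Min}(I)$ is contained in the (finite) set of radicals of the $J_{i}$'s.

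In detail, first I would apply Lemma~\ref{lem1}(3) to each factor $J_{i}$: since $J_{i}$ is an $OA$-ideal it is primary, so $P_{i}:=\sqrt{J_{i}}$ is a prime ideal and is the unique minimal prime over $J_{i}$. Next I would compute
\[
\sqrt{I}=\sqrt{\prod_{i=1}^{n}J_{i}}=\sqrt{\bigcap_{i=1}^{n}J_{i}}=\bigcap_{i=1}^{n}\sqrt{J_{i}}=\bigcap_{i=1}^{n}P_{i},
\]
using the standard facts that the radical of a product equals the radical of the intersection and that radical commutes with finite intersections.

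Then I would invoke the general principle that for any ideal $J$, the set ${\rm Min}(J)$ coincides with the set of minimal elements of the collection of primes containing $J$, and depends only on $\sqrt{J}$. Since $\sqrt{I}=\bigcap_{i=1}^{n}P_{i}$, any prime $Q$ containing $I$ must contain some $P_{i}$ by prime avoidance applied to the intersection (a prime that contains a finite intersection of primes contains one of them). Consequently, if $Q\in{\rm Min}(I)$, then $P_{i}\subseteq Q$ for some $i$, and by minimality $Q=P_{i}$. Hence
\[
{\rm Min}(I)\subseteq\{P_{1},P_{2},\dots,P_{n}\},
\]
which is a finite set. There is no real obstacle here; the only point that needs a line of justification is the passage from $\bigcap P_{i}\subseteq Q$ to $P_{i}\subseteq Q$ for some $i$, which is the elementary fact about primes containing finite intersections of ideals.
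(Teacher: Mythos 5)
Your proposal is correct and follows essentially the same route as the paper: the paper simply notes that ${\rm Min}(I)\subseteq\bigcup_{i=1}^{n}{\rm Min}(J_i)$ and that each ${\rm Min}(J_i)$ is a singleton by Lemma~\ref{lem1}(3), which is exactly the content of your argument, with your passage through $\sqrt{I}=\bigcap_i P_i$ being just a slightly more detailed way of saying the same thing. No gaps; the prime-containing-a-finite-intersection step you flag is indeed the only point needing a word, and you justify it correctly.
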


\begin{proof} Let $I=\prod_{i=1}^n I_i$ be an $OA$-factorization. It follows that ${\rm Min}(I)\subseteq\bigcup_{i=1}^n {\rm Min}(I_i)$, and thus $|{\rm Min}(I)|\leq n$.
\end{proof}

Let $R$ be a ring and $I$ be an ideal of $R$. Then $I$ is called a {\it multiplication ideal} of $R$ if for each ideal $J$ of $R$ with $J\subseteq I$, there is some ideal $L$ of $R$ such that $J=IL$.

\begin{lemma}\label{lem3} Let $R$ be a local ring such that each proper principal ideal of $R$ has an $OA$-factorization. Then each nonmaximal minimal prime ideal of $R$ is principal.
\end{lemma}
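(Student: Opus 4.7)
The plan is to write $M$ for the maximal ideal of $R$ and to dispose of the case $P=0$ at once (then $P=0R$ is principal); for $P\neq 0$, the aim is to exhibit a principal generator of $P$ inside $P\setminus MP$, once it is shown that this set is nonempty. The key preliminary observation I will use is that, for any nonzero $x\in P$, every $OA$-factorization $xR=I_1\cdots I_n$ contains a factor equal to $P$. Indeed, $\prod_i I_i=xR\subseteq P$ and $P$ prime give some $I_j\subseteq P$; by Lemma~\ref{lem1}(3) each $I_i$ is primary, so $\sqrt{I_j}$ is a prime of $R$ contained in $P$, and minimality of $P$ forces $\sqrt{I_j}=P$; by Lemma~\ref{lem1}(2) the $OA$-ideal $I_j$ is either prime (hence $I_j=\sqrt{I_j}=P$) or contains $M^2$, but the latter case forces $M^2\subseteq P$ and so $M\subseteq P$, contradicting $P\neq M$. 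Thus $I_j=P$ and we may write $xR=P\cdot A_x$ with $A_x:=\prod_{i\neq j}I_i$.

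The main obstacle is to rule out $MP=P$ without knowing a priori that $P$ is finitely generated, so that Nakayama cannot be applied to $P$ directly. The trick is to apply Nakayama instead to the cyclic (hence finitely generated) $R$-module $xR$: supposing $MP=P$, for any nonzero $x\in P$ the chain of equalities
\[ xR \;=\; PA_x \;=\; (MP)A_x \;=\; M(PA_x) \;=\; M\cdot xR \]
combined with Nakayama's lemma (applicable since $M$ is the Jacobson radical of the local ring $R$) forces $xR=0$, hence $x=0$, a contradiction. Therefore $MP\subsetneq P$.

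With $MP\subsetneq P$ established, the proof closes quickly: choose $x\in P\setminus MP$, use the factorization $xR=P\cdot A_x$ from above, and note that $A_x\not\subseteq M$, for otherwise $xR=PA_x\subseteq MP$ would contradict the choice of $x$. Since $R$ is local, any ideal of $R$ not contained in $M$ meets the units, hence equals $R$; thus $A_x=R$ and $xR=P$, which exhibits $P$ as a principal ideal.
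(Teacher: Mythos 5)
Your argument is correct, and its second half takes a genuinely different route from the paper. Both proofs begin the same way: for a nonunit $x\in P$ one takes an $OA$-factorization of $xR$, observes that some factor lies in $P$, and uses Lemma~\ref{lem1}(2) together with the minimality and nonmaximality of $P$ to conclude that this factor equals $P$, so that $xR=PA_x$ for some ideal $A_x$. From there the paper deduces $xR=P(xR:P)$ for every $x\in P$, concludes that $P$ is a multiplication ideal, and then invokes Anderson's theorem (\cite[Theorem 1]{A}) that a multiplication ideal of a quasi-local ring is principal. You instead stay self-contained: you first rule out $MP=P$ (for $P\neq\boldsymbol{0}$) by the nice device of applying Nakayama's lemma to the cyclic module $xR$ rather than to $P$ itself, which neatly sidesteps the fact that $P$ is not known to be finitely generated; then any $x\in P\setminus MP$ forces $A_x\nsubseteq M$, hence $A_x=R$ and $P=xR$. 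The paper's route is shorter on the page but leans on a nontrivial external result about multiplication ideals; yours is elementary, needing only Nakayama for cyclic modules, at the cost of the extra case split $P=\boldsymbol{0}$ versus $P\neq\boldsymbol{0}$ and the intermediate step $MP\subsetneq P$. Both are complete proofs of the lemma.
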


\begin{proof} Let $P$ be a nonmaximal minimal prime ideal of $R$. By \cite[Theorem 1]{A} it is sufficient to show that $P$ is a multiplication ideal.

Let $x\in P$ and let $xR=\prod_{i=1}^n I_i$ be an $OA$-factorization. There is some $j\in [1,n]$ such that $I_j\subseteq P$. By Lemma~\ref{lem1}(2) we have that $P=I_j$, and hence $xR=PJ$ for some ideal $J$ of $R$. We infer that $xR=P(xR:P)$.

Now let $I$ be an ideal of $R$ such that $I\subseteq P$. Then $I=\sum_{y\in I} yR=\sum_{y\in I} P(yR:P)=P\sum_{y\in I} (yR:P)$, and thus $P$ is a multiplication ideal.
\end{proof}

The next result is a generalization of \cite[Theorem 46.8]{GI} and its proof is based on the proof of the same result.

\begin{proposition}\label{prop4} Let $R$ be a local ring with maximal ideal $M$ such that $\dim(R)\geq 1$ and every proper principal ideal of $R$ has an $OA$-factorization. Then $R$ is an integral domain and if $\dim(R)\geq 2$, then $R$ is a unique factorization domain.
\end{proposition}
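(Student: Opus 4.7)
The plan is to prove the two conclusions in sequence: first that $R$ is an integral domain, then, assuming $\dim(R)\geq 2$, to upgrade to a UFD via Kaplansky's criterion.

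For the integral domain assertion, I would first apply Proposition~\ref{prop3} to $(0)$, which is a proper principal ideal admitting an $OA$-factorization, to conclude $\mathrm{Min}(R)=\mathrm{Min}(0)$ is finite. Since $\dim(R)\geq 1$ precludes $M\in\mathrm{Min}(R)$, every minimal prime is properly contained in $M$, and Lemma~\ref{lem3} renders each principal, say $p_1R,\dots,p_rR$. Suppose for contradiction $R$ is not a domain; then no $p_iR$ equals $(0)$. Pick an $OA$-factorization $(0)=I_1\cdots I_n$ of minimal length; no $I_j=(0)$, since Lemma~\ref{lem1}(2) would then make $(0)$ prime (contradicting the assumption) or force $M^2=(0)$ (so $\dim(R)=0$). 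Minimality of $n$ gives $I_2\cdots I_n\neq(0)$, so there is $y\neq 0$ with $yI_1=(0)$. A case analysis on $I_1$ using Lemma~\ref{lem1}(2) then yields a contradiction: in the case $M^2\subseteq I_1\subseteq M$ one extracts nonzero $z$ with $\mathrm{Ann}(z)=M$, making $zR\cong R/M$ simple, so that factoring $zR$ via the hypothesis forces some $p_jR$ to satisfy $p_j^2=0$ and therefore all the $p_iR$ to collapse into a single principal nilpotent minimal prime incompatible with $\dim(R)\geq 1$; the prime case is handled by iterating annihilator relations through the finite list of principal minimal primes to reach the same kind of contradiction.

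For the UFD statement, with $R$ a domain and $\dim(R)\geq 2$, I would verify Kaplansky's criterion, namely that every nonzero prime of $R$ contains a nonzero principal prime. First, every height-one prime $P$ is principal: $P\neq M$ (as $\dim(R)\geq 2$), and for any nonzero $x\in P$ the $OA$-factorization $xR=\prod I_k$ gives some $I_k\subseteq P$; Lemma~\ref{lem1}(2) forbids $M^2\subseteq I_k$ (else $M=P$), so $I_k$ is prime, and the height-one property together with $I_k\neq(0)$ forces $I_k=P$. Therefore $xR=P\cdot(xR:P)$, and the sum computation at the end of the proof of Lemma~\ref{lem3} shows $P$ is a multiplication ideal, hence principal in the local ring $R$ by the Anderson reference cited there. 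Second, every nonzero prime $Q$ contains a height-one prime: choosing $q\in Q$ nonzero and factoring $qR=\prod I_k$, either $Q$ is itself a prime factor (the sum argument then makes $Q$ a principal prime, and $Q$ trivially contains itself as a nonzero principal prime) or some prime factor is strictly inside $Q$, and iterating the descent reaches a height-one prime inside $Q$. Kaplansky's criterion then yields that $R$ is a UFD.

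The main obstacle is the integral domain step: translating the annihilator relation $yI_1=(0)$ into a contradiction against the principal structure of the finitely many minimal primes without any Noetherian hypothesis requires careful tracking of nilpotent and annihilator relations, and forms the technical core of the proof.
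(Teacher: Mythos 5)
The integral-domain step---which you yourself flag as the technical core---is not actually carried out, and the one concrete contradiction you assert is a non sequitur. From $yI_1=(0)$ with $M^2\subseteq I_1$ you can indeed extract $z\neq 0$ with $\mathrm{Ann}(z)=M$, but it is never explained why factoring $zR$ should ``force some $p_jR$ with $p_j^2=0$'', and even granting that, a principal square-zero minimal prime is perfectly compatible with $\dim(R)\geq 1$ (for instance the localization of $k[X,Y]/(X^2)$ at $(X,Y)$ is a one-dimensional local ring whose unique minimal prime is principal with square zero), so no contradiction with $\dim(R)\geq 1$ results; the ``prime case'' is only a promise to ``iterate annihilator relations''. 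The paper proves the domain statement by a genuinely different argument: in dimension one it shows $Q\subseteq M^2\subseteq zR$ for every minimal prime $Q$ and a suitable $z\in M\setminus M^2$ (prime avoidance applied to $M^2\cup\bigcup_{Q\in\mathrm{Min}(\boldsymbol{0})}Q$, together with $\bigcap_{y\in R\setminus P}(P+yR)=P$ for nonmaximal $P$), whence $Q=zQ$ and $Q=\boldsymbol{0}$ by Nakayama since $Q$ is principal; in dimension $\geq 2$ it first passes to $R/N$ ($N$ the nilradical), settles the reduced case using a regular element and invertibility of its factors, and then kills $N$. Your plan has no counterpart to this, and since your UFD step begins ``with $R$ a domain'', the entire proposal rests on the missing part.

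The UFD half also has gaps, though more repairable ones. The descent ``iterate until reaching a height-one prime inside $Q$'' has no termination argument: at this stage nothing bounds $\dim(R)$ or provides any chain condition on primes (Proposition~\ref{prop5} is not available, as it needs $2$-generated ideals). The case $Q=M$ is not addressed (there the $OA$-factors of $qR$ need not be prime); one must observe, as the paper does, that it suffices to treat nonzero nonmaximal primes because $M$ contains one when $\dim(R)\geq 2$. In the branch ``$Q$ is itself a prime factor'', concluding that $Q$ is principal by the sum computation requires $Q$ to occur as a factor of $yR$ for \emph{every} $y\in Q$, which you have only for the single element $q$. All of this can be bypassed: once $R$ is a domain, $qR$ is invertible, so every $OA$-factor of it is invertible and hence regular principal ($R$ being local), and a factor contained in a nonmaximal prime is prime by Lemma~\ref{lem1}(2); this immediately yields a nonzero principal prime inside every nonzero nonmaximal prime, which is exactly how the paper verifies the Kaplansky-type criterion of \cite[Theorem 2.6]{AMA}, with no height-one analysis and no descent.
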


\begin{proof} Let $N$ be the nilradical of $R$. It follows from Proposition~\ref{prop3} and Lemma~\ref{lem3} that ${\rm Min}(\boldsymbol{0})$ is finite and each $P\in {\rm Min}(\boldsymbol{0})$ is principal.

\textsc{Claim}: Every proper principal ideal of $R/N$ has an $OA$-factorization. Let $I$ be a proper principal ideal of $R/N$. Then $I=(xR+N)/N$ for some $x\in M$. Let $xR=\prod_{i=1}^n I_i$ be an $OA$-factorization. We infer that $I=(xR)/N=(\prod_{i=1}^n I_i)/N=\prod_{i=1}^n (I_i/N)$. It suffices to show that $I_i/N$ is an $OA$-ideal of $R/N$ for each $i\in [1,n]$. Let $i\in [1,n]$. If $I_i$ is a prime ideal of $R$, then $N\subseteq I_i$, and hence $I_i/N$ is a prime ideal of $R/N$. Now let $I_i$ be not a prime ideal of $R$. By Lemma~\ref{lem1}(2), we have that $M^2\subseteq I_i\subseteq M$. Note that $R/N$ is local with maximal ideal $M/N$. Since $(M/N)^2=M^2/N\subseteq I_i/N\subseteq M/N$, it follows by Lemma~\ref{lem1}(2) that $I_i/N$ is an $OA$-ideal of $R/N$. This proves the claim.

\textsc{Case} 1: $R$ is one-dimensional. We prove that $R$ is an integral domain. If every $OA$-ideal of $R$ is a prime ideal, then $R$ is $\pi$-ring, and hence $R$ is an integral domain by \cite[Theorem 46.8]{GI}. Now let not every $OA$-ideal of $R$ be a prime ideal. It follows from Lemma~\ref{lem1}(2) that $M$ is not idempotent. Set $L=M^2\cup\bigcup_{Q\in {\rm Min}(\boldsymbol{0})} Q$. Next we prove that $M^2\subseteq xR$ for each $x\in R\setminus L$. Let $x\in R\setminus L$. Without restriction let $x$ be a nonunit. Note that $xR$ cannot be a product of more than one $OA$-ideal, and hence $xR$ is an $OA$-ideal. By Lemma~\ref{lem1}(2) we have that $M^2\subseteq xR$.

Now we show that $P\subseteq M^2$ for each $P\in {\rm Min}(\boldsymbol{0})$. Let $P\in {\rm Min}(\boldsymbol{0})$. Assume that $P\nsubseteq M^2$. Let $w\in R\setminus P$. Then $P+wR\nsubseteq L$ by the prime avoidance lemma, and thus there is some $v\in (P+wR)\setminus L$. It follows that $M^2\subseteq vR\subseteq P+wR$. Since $P$ is a nonmaximal prime ideal, we have that $R/P$ has no simple $R/P$-submodules, and hence $\bigcap_{y\in R\setminus P} (P+yR)=P$. (Note that if $\bigcap_{y\in R\setminus P} (P+yR)\not=P$, then $\bigcap_{y\in R\setminus P} (P+yR)/P$ is a simple $R/P$-submodule of $R/P$.) This implies that $M^2\subseteq\bigcap_{y\in R\setminus P} (P+yR)=P$, and thus $P=M$, a contradiction.

Let $Q\in {\rm Min}(\boldsymbol{0})$. By the prime avoidance lemma, there is some $z\in M\setminus L$. We infer that $Q\subset M^2\subset zR$. Consequently, $Q=zQ$. Since $Q$ is principal, it follows that $Q=\boldsymbol{0}$ (e.g. by Nakayama's lemma), and hence $R$ is an integral domain.

\textsc{Case} 2: $\dim(R)\geq 2$ and $R$ is reduced. We show that $R$ is a unique factorization domain. There is some nonmaximal nonminimal prime ideal $Q$ of $R$. By the prime avoidance lemma, there is some $x\in Q\setminus\bigcup_{P\in {\rm Min}(\boldsymbol{0})} P$. Since $R$ is reduced, we have that $\bigcap_{L\in {\rm Min}(\boldsymbol{0})} L=\boldsymbol{0}$. If $y\in R$ is nonzero with $xy=0$, then $y\not\in L$ and $xy\in L$ for some $L\in {\rm Min}(\boldsymbol{0})$, and hence $x\in L$, a contradiction. We infer that $x$ is a regular element of $R$. Let $xR=\prod_{i=1}^n I_i$ be an $OA$-factorization. Then $I_j\subseteq Q$ for some $j\in [1,n]$. Since $x$ is regular, $I_j$ is invertible, and hence $I_j$ is a regular principal ideal (because invertible ideals of a local ring are regular principal ideals). Since $I_j\subseteq Q$ and $Q\not=M$, we have that $I_j$ is a prime ideal by Lemma~\ref{lem1}(2). Consequently, $P\subseteq I_j$ for some $P\in {\rm Min}(\boldsymbol{0})$. Since $I_j$ is regular, we infer that $P\subset I_j$, and hence $P=PI_j$ (since $I_j$ is principal). It follows (e.g. from Nakayama's lemma) that $P=\boldsymbol{0}$ (since $P$ is principal). We obtain that $R$ is an integral domain.

To show that $R$ is a unique factorization domain, it suffices to show by \cite[Theorem 2.6]{AMA} that every nonzero prime ideal of $R$ contains a nonzero principal prime ideal. Since $\dim(R)\geq 2$ and $R$ is local, we only need to show that every nonzero nonmaximal prime ideal of $R$ contains a nonzero principal prime ideal. Let $L$ be a nonzero nonmaximal prime ideal of $R$ and let $z\in L$ be nonzero. Let $zR=\prod_{k=1}^m J_k$ be an $OA$-factorization. Then $J_{\ell}\subseteq L$ for some $\ell\in [1,m]$. Since $R$ is an integral domain, $zR$ is invertible, and hence $J_{\ell}$ is invertible. Therefore, $J_{\ell}$ is nonzero and principal (since $R$ is local). Since $L\not=M$, it follows from Lemma~\ref{lem1}(2) that $J_{\ell}$ is a prime ideal.

\textsc{Case} 3: $\dim(R)\geq 2$. We have to show that $R$ is a unique factorization domain. Note that $R/N$ is a reduced local ring with maximal ideal $M/N$ and $\dim(R/N)\geq 2$. Moreover, each proper principal ideal of $R/N$ has an $OA$-factorization by the claim. It follows by Case 2 that $R/N$ is a unique factorization domain, and thus $N$ is the unique minimal prime ideal of $R$. Since $R/N$ is a unique factorization domain and $\dim(R/N)\geq 2$, $R/N$ possesses a nonzero nonmaximal principal prime ideal. We infer that there is some nonminimal nonmaximal prime ideal $Q$ of $R$ such that $Q/N$ is a principal ideal of $R/N$. Consequently, there is some $q\in Q$ such that $Q=qR+N$. Let $qR=\prod_{i=1}^n I_i$ be an $OA$-factorization. Then $I_j\subseteq Q$ for some $j\in [1,n]$. Since $Q\not=M$, we infer by Lemma~\ref{lem1}(2) that $I_j$ is a prime ideal of $R$. Therefore, $Q=qR+N\subseteq I_j\subseteq Q$, and hence $I_j=Q$.

Assume that $Q\not=qR$. Then $qR=QJ$ for some proper ideal $J$ of $R$. It follows that $q\in qR=(qR+N)J\subseteq qJ+N$, and thus $q(1-a)\in N$ for some $a\in J$. Since $a$ is a nonunit of $R$, we obtain that $q\in N$. This implies that $Q=qR+N=N$, a contradiction. We infer that $Q=qR$. Since $N\subset Q$ and $N$ is a prime ideal of $R$, we have that $N=NQ$. Consequently, $N=\boldsymbol{0}$ (e.g. by Nakayama's lemma, since $N$ is principal), and thus $R\cong R/N$ is a unique factorization domain.
\end{proof}

\begin{proposition}\label{prop5} Let $R$ be a local ring with maximal ideal $M$ such that each proper $2$-generated ideal of $R$ has an $OA$-factorization. Then $\dim(R)\leq 2$ and each nonmaximal prime ideal of $R$ is principal.
\end{proposition}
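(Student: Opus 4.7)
The plan is to use Proposition~\ref{prop4} to reduce to the UFD case and then rule out $\dim(R) \geq 3$ by exhibiting a $2$-generated ideal with no $OA$-factorization. Every proper principal ideal is $2$-generated, so Proposition~\ref{prop4} applies. If $\dim(R) \leq 1$ then either there are no nonmaximal primes, or $R$ is a domain (by Proposition~\ref{prop4}) and $\boldsymbol{0}$ is the only nonmaximal prime; in either case both conclusions hold. If $\dim(R) \geq 2$, Proposition~\ref{prop4} gives that $R$ is a UFD; in a local UFD of dimension at most $2$ the nonmaximal primes are exactly $\boldsymbol{0}$ together with the height-$1$ (hence principal) primes, so it suffices to show $\dim(R) \leq 2$.

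Suppose for contradiction that $R$ is a local UFD of dimension $\geq 3$. I would first locate a nonmaximal prime $Q_0$ of height $\geq 2$: fix any principal prime $\pi R \subset M$, pass to the domain $R/\pi R$ of dimension $\geq 2$, take a nonzero nonmaximal prime there, and lift back. Assuming $Q_0 \neq Q_0^2$, pick $y \in Q_0 \setminus Q_0^2$ and factor it in the UFD; some prime factor $p$ of $y$ must lie in $Q_0 \setminus Q_0^2$, since otherwise every prime factor of $y$ that lies in $Q_0$ would lie in $Q_0^2$, forcing $y \in Q_0^2$. Then pick a principal prime $q \in Q_0 \setminus pR$, which exists because $Q_0 \supsetneq pR$ and any element of $Q_0 \setminus pR$ has a prime factor in $Q_0$ non-associate to $p$. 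Since $(p,q) \subseteq Q_0 \subsetneq M$, some minimal prime $Q$ of $(p,q)$ lies inside $Q_0$; this $Q$ is nonmaximal, of height $\geq 2$, and satisfies $p \notin Q_0^2 \supseteq Q^2$.

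Now examine a hypothetical $OA$-factorization $I = J_1 \cdots J_n$ of $I = pR + q^2R$. Any prime containing $I$ must contain $q$ (from $q^2 \in I$ and primality), hence $(p,q)$, so the minimal primes of $I$ coincide with those of $(p,q)$; in particular $Q$ is a minimal prime of $I$. From $\sqrt{I} = \bigcap_j \sqrt{J_j}$ and prime avoidance one gets $Q = \sqrt{J_k}$ for some $k$, and Lemma~\ref{lem1}(2) applied to the nonmaximal $OA$-ideal $J_k$ forces $J_k = Q$, so $I = QL$ with $L = \prod_{j \neq k} J_j \subseteq M$. Reducing modulo $pR$, the equality $I = QL$ becomes $\bar q^2 \bar R = \bar Q \bar L$ in the domain $\bar R = R/pR$, with $\bar q \in \bar Q$ nonzero. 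If $\bar L \not\subseteq \bar Q$, localizing at $\bar Q$ extends $\bar L$ to the unit ideal, so the maximal ideal of the local domain $\bar R_{\bar Q}$ equals $\bar q^2 \bar R_{\bar Q}$; writing $\bar q = \bar q^2 s$, the element $1 - \bar q s$ is a unit (its $\bar q s$-summand lies in the maximal ideal), and hence $\bar q = 0$, a contradiction. If $\bar L \subseteq \bar Q$, then $L \subseteq Q$ and $I = QL \subseteq Q^2$, whence $p \in Q^2$, contradicting the choice of $p$. Either way, $I$ has no $OA$-factorization, so $\dim(R) \leq 2$.

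The most delicate step I expect is the combined selection of $Q_0$, $p$, and $q$: one must verify that some nonmaximal prime $Q_0$ of height $\geq 2$ is non-idempotent (so that a principal prime $p \in Q_0 \setminus Q_0^2$ can be selected), and one must locate $Q$ as a minimal prime of $(p,q)$ inside $Q_0$ so that $Q$ is actually forced to appear among the $OA$-factors. Once these selections are secured, the modulo-$p$ localization argument closes both cases very cleanly.
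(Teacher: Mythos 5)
Your endgame (the identification of one factor of an $OA$-factorization of $pR+q^2R$ with the minimal prime $Q$ via Lemma~\ref{lem1}(2), and the two-case analysis modulo $pR$) is correct, and your reduction of the principality statement to Proposition~\ref{prop4} matches the paper. But there is a genuine gap exactly at the step you flagged: you need a nonmaximal prime $Q_0$ of height at least $2$ with $Q_0\neq Q_0^2$, so that a prime element $p\in Q_0\setminus Q_0^2$ can be chosen and then $p\notin Q^2$ for the minimal prime $Q\subseteq Q_0$ of $(p,q)$. Proposition~\ref{prop4} only gives that $R$ is a UFD; it does not give Noetherianness, and the proposition carries no Noetherian hypothesis (indeed the paper emphasizes that local $OAF$-domains need not be Noetherian), so Nakayama or the Krull intersection theorem cannot be invoked to rule out $Q_0=Q_0^2$. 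Nothing in your argument excludes the possibility that every nonmaximal prime of height $\geq 2$ of a non-Noetherian local UFD is idempotent, and without $p\notin Q^2$ the case $\bar L\subseteq\bar Q$ produces no contradiction. Note also that $p\notin Q^2$ is a genuine constraint rather than an automatic feature of prime elements: in $k[x,y,z]_{(x,y,z)}$ the element $x^2+y^3$ is prime and lies in $(x,y)^2$, with $(x,y)$ a nonmaximal prime minimal over $(x^2+y^3,\,y)$. Unless you supply an argument that a suitable non-idempotent $Q_0$ exists, the proof of $\dim(R)\leq 2$ is incomplete.

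For comparison, the paper's proof sidesteps this difficulty entirely: for each nonmaximal prime $P$ it localizes at $P$, observes that an $OA$-factorization of a $2$-generated ideal $J\subseteq P$ localizes to a product of prime ideals of $R_P$ (the factors contained in $P$ are prime by Lemma~\ref{lem1}(2), the remaining factors blow up to $R_P$), and then invokes Levitz's theorem \cite[Theorem 3.2]{L} to conclude that $R_P$ is a general $ZPI$-ring, hence $\dim(R_P)\leq 1$ and so $\dim(R)\leq 2$; the principality of the nonmaximal primes then follows from Proposition~\ref{prop4} exactly as in your last paragraph of the reduction. If you wish to keep your hands-on construction (which has the merit of avoiding Levitz's theorem), you must either prove that the relevant $Q_0$ can be chosen non-idempotent in this generality, or restrict to a setting where Nakayama applies; otherwise the localization-plus-Levitz route is the way to secure the dimension bound.
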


\begin{proof} First we show that $\dim(R_P)\leq 1$ for each nonmaximal prime ideal $P$ of $R$. Let $P$ be a nonmaximal prime ideal and let $I$ be a proper $2$-generated ideal of $R_P$. Observe that $I=J_P$ for some $2$-generated ideal $J$ of $R$ with $J\subseteq P$. Let $J=\prod_{i=1}^n J_i$ be an $OA$-factorization. Then $I=J_P=\prod_{i=1}^n (J_i)_P=\prod_{i=1,J_i\subseteq P}^n (J_i)_P$. If $i\in [1,n]$ is such that $J_i\subseteq P$, then $J_i$ is a prime ideal of $R$ by Lemma~\ref{lem1}(2), and thus $(J_i)_P$ is a prime ideal of $R_P$. We infer that $I$ is a product of prime ideals of $R_P$. It follows from \cite[Theorem 3.2]{L}, that $R_P$ is a general $ZPI$-ring. It is an easy consequence of \cite[page 205]{LM} that $\dim(R_P)\leq 1$.

This implies that $\dim(R)\leq 2$. It remains to show that every nonmaximal prime ideal of $R$ is principal. Without restriction let $\dim(R)\geq 1$. It follows from Proposition~\ref{prop4} that $R$ is either a one-dimensional domain or a two-dimensional unique factorization domain. In any case we have that each nonmaximal prime ideal of $R$ is principal.
\end{proof}

In the next result we will prove a generalization of the fact that every $OAF$-ring has Krull dimension at most one.

\begin{theorem}\label{thm2} Let $R$ be a ring such that every proper $2$-generated ideal of $R$ has an $OA$-factorization. Then $\dim(R)\leq 1$.
\end{theorem}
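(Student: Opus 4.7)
The plan is to reduce to the local case, invoke Proposition~\ref{prop5} to get $\dim(R_M)\leq 2$, and then rule out dimension two by exhibiting a $2$-generated ideal with no $OA$-factorization. For the reduction, any proper $2$-generated ideal of $R_M$ has the form $(a,b)R_M$ for some $2$-generated ideal $(a,b)\subseteq M$ of $R$, and an $OA$-factorization $(a,b)=\prod_i J_i$ in $R$ localizes to $(a,b)R_M=\prod_i(J_iR_M)$, with each proper $J_iR_M$ an $OA$-ideal of $R_M$ by \cite[Theorem 2.18]{YNN}---exactly as in Proposition~\ref{prop2}. Thus $R_M$ inherits the hypothesis, and Proposition~\ref{prop5} gives $\dim(R_M)\leq 2$. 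Since $\dim(R)=\sup_M\dim(R_M)$, it suffices to show $\dim(R_M)\leq 1$ for each local $R_M$ satisfying the hypothesis.

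Assume for contradiction that $R$ is local with maximal ideal $M$ and $\dim(R)=2$. By Proposition~\ref{prop4}, $R$ is then a two-dimensional unique factorization domain. Pick any prime element $q\in M$; a factor-counting argument in the $UFD$ shows $q\notin M^2$ (otherwise $q$ would be a product of at least two nonunits, contradicting irreducibility). Since the maximal ideal of a two-dimensional local $UFD$ cannot be principal (a principal prime of a $UFD$ has height at most one), there exists another prime $p\in M$ with $pR\neq qR$. I will show that the proper $2$-generated ideal $I:=(p^3,q)R$ admits no $OA$-factorization, contradicting the hypothesis.

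Suppose $I=\prod_{i=1}^n J_i$ is such a factorization. The only prime of $R$ containing both $p$ and $q$ is $M$, so $\sqrt{I}=M$; combining this with $\sqrt{I}=\bigcap_i\sqrt{J_i}$ and the fact that each $\sqrt{J_i}$ is a proper prime (Lemma~\ref{lem1}(3)) forces $\sqrt{J_i}=M$ for every $i$. Lemma~\ref{lem1}(2) then yields $M^2\subseteq J_i\subseteq M$. If $n\geq 2$, the product lies in $M^2$, contradicting $q\in I\setminus M^2$; hence $n=1$, meaning $I$ itself is an $OA$-ideal. Applying Lemma~\ref{lem1}(2) once more yields $M^2\subseteq I$, so $p^2\in I$, giving $p^2=p^3\alpha+q\beta$ for some $\alpha,\beta\in R$.

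The main obstacle is to contradict this last equation---the payoff of choosing a cube (rather than, say, $(p^2,q)$, which can itself be an $OA$-ideal when $M^2\subseteq(p^2,q)$) is that the divisibility modulo $q$ becomes tight. Passing to the integral domain $A:=R/qR$ yields $\bar p^2(1-\bar p\bar\alpha)=0$. Since $p$ and $q$ are distinct primes of the $UFD$ $R$, $p\notin qR$, so $\bar p\neq 0$ and hence $\bar p^2\neq 0$. This forces $\bar p\bar\alpha=1$, so $\bar p$ is a unit of $A$; but $\bar p$ lies in the maximal ideal $M/qR$ of $A$ (otherwise $1=ps-qt$ for some $s,t\in R$, giving $1\in M$), and a unit cannot lie in a proper maximal ideal. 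This contradiction rules out $\dim(R)=2$, completing the proof.
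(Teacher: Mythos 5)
Your overall strategy is genuinely different from the paper's (which argues globally: either every $OA$-ideal is prime, whence $R$ is a general $ZPI$-ring by \cite[Theorem 3.2]{L}, or $R$ is local with $M$ not idempotent, and then every nonmaximal prime is shown to lie in $M^2$ and is killed by Nakayama), and most of your steps are sound: the localization reduction, the use of Propositions~\ref{prop5} and~\ref{prop4} to land in a two-dimensional local UFD, and the final contradiction extracted from the ideal $(p^3,q)$ all check out. But there is a genuine gap at the step ``pick any prime element $q\in M$; a factor-counting argument shows $q\notin M^2$.'' This is false: $M^2$ consists of \emph{sums} of products of two nonunits, not just products, so an irreducible element can perfectly well lie in $M^2$. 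For instance, in $R=k[\![x,y]\!]$ the element $x^2+y^3$ is prime, yet $x^2+y^3\in M^2=(x^2,xy,y^2)$. So your justification does not produce the prime $q\notin M^2$ on which the whole contradiction rests.

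What you actually need is only the existence of \emph{some} nonunit outside $M^2$, i.e. $M\neq M^2$: any $q\in M\setminus M^2$ is automatically irreducible (a product of two nonunits lies in $M^2$), hence prime in the UFD $R$. The fact $M\neq M^2$ is not a general property of local UFDs that you can just assert; in your situation it must be derived from the hypothesis. One way, entirely inside the paper's toolkit, is the case distinction that opens the paper's own proof: if $M=M^2$, then by Lemma~\ref{lem1}(2) every $OA$-ideal of $R$ is prime, so every proper $2$-generated ideal is a product of prime ideals, and $R$ is a general $ZPI$-ring by \cite[Theorem 3.2]{L}; this forces $\dim(R)\leq 1$, contradicting $\dim(R)=2$. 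With that repair inserted, your argument goes through and gives an alternative, more element-theoretic proof of Theorem~\ref{thm2}; as written, however, the existence of the prime $q\notin M^2$ is unjustified.
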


\begin{proof} If every $OA$-ideal of $R$ is a prime ideal, then $R$ is a general $ZPI$-ring by \cite[Theorem 3.2]{L}, and hence $\dim(R)\leq 1$ by \cite[page 205]{LM}. Now let not every $OA$-ideal of $R$ be a prime ideal. We infer by Lemma~\ref{lem1} that $R$ is local and the maximal ideal of $R$ is not idempotent. Let $M$ be the maximal ideal of $R$. It suffices to show that if $Q$ is a nonmaximal prime ideal of $R$, then $Q=\boldsymbol{0}$. Let $Q$ be a nonmaximal prime ideal of $R$.

Assume that $Q\nsubseteq M^2$. Since $\dim(R)\leq 2$ by Proposition~\ref{prop5}, there is some prime ideal $P$ of $R$ such that $Q\subseteq P$ and $\dim(R/P)=1$. Next we show that $M^2\subseteq P+yR$ for each $y\in R\setminus P$. Let $y\in R\setminus P$ and set $J=P+yR$. Without restriction let $J\subset M$. Note that $J$ is $2$-generated by Proposition~\ref{prop5}. Since $J\nsubseteq M^2$, $J$ cannot be a product of more than one $OA$-ideal, and thus $J$ is an $OA$-ideal of $R$. Since $P\subset J\subset M$, we have that $J$ is not a prime ideal of $R$, and thus $M^2\subseteq J$ by Lemma~\ref{lem1}(2). Moreover, $R/P$ is an integral domain that is not a field. Consequently, $R/P$ does not have any simple $R/P$-submodules, which implies that $P=\bigcap_{x\in R\setminus P} (P+xR)$. (Observe that if $\bigcap_{x\in R\setminus P} (P+xR)\not=P$, then $\bigcap_{x\in R\setminus P} (P+xR)/P$ is a simple $R/P$-submodule of $R/P$.) Therefore, $M^2\subseteq\bigcap_{x\in R\setminus P} (P+xR)=P$, and hence $P=M$, a contradiction. We infer that $Q\subseteq M^2$.

There is some $z\in M\setminus M^2$ (since $M$ is not idempotent). Since $zR$ is a product of $OA$-ideals, we have that $zR$ is an $OA$-ideal of $R$. As shown before, $L\subseteq M^2$ for each nonmaximal prime ideal $L$ of $R$, and thus $zR$ is not a nonmaximal prime ideal. Consequently, $Q\subset M^2\subset zR$ by Lemma~\ref{lem1}(2), and hence $Q=zQ$. Since $Q$ is principal by Proposition~\ref{prop5}, it follows (e.g. by Nakayama's lemma) that $Q=\boldsymbol{0}$.
\end{proof}

\begin{lemma}\label{lem4} Let $D$ be a local domain with maximal ideal $M$. Then each proper principal ideal of $D$ has an $OA$-factorization if and only if $D$ is atomic and each irreducible element generates an $OA$-ideal. If these equivalent conditions are satisfied, then $\bigcap_{n\in\mathbb{N}} P^n=\boldsymbol{0}$ for each height-one prime ideal $P$ of $D$.
\end{lemma}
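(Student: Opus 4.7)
The backward direction is direct: atomicity lets us factor any nonzero nonunit $x = a_1\cdots a_r$ into irreducibles, whence $xD = (a_1D)\cdots(a_rD)$ is an $OA$-factorization, and the zero ideal is itself prime in a domain and hence $OA$. The forward direction carries the main content, and I would start from the following observation: if $xD = I_1 \cdots I_n$ is an $OA$-factorization of a nonzero proper principal ideal, then because $xD$ is invertible in the domain $D$, each factor $I_j$ is invertible as well, and an invertible ideal in a local ring is necessarily principal. Writing $I_j = \alpha_j D$ with each $\alpha_j$ a nonunit, we obtain an element-level factorization $x = u\alpha_1\cdots\alpha_n$ with $u$ a unit.

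The next task is to show each $\alpha_j$ is a product of at most two irreducibles. By Lemma~\ref{lem1}(2), the $OA$-ideal $\alpha_j D$ is either prime---so $\alpha_j$ is a prime element and therefore irreducible---or satisfies $M^2 \subseteq \alpha_j D \subseteq M$. In the latter case, I would show that any factorization $\alpha_j = \beta\gamma$ with both factors nonunits already has $\beta$ and $\gamma$ irreducible: if $\beta = \beta'\beta''$ with $\beta',\beta'' \in M$, then $\beta \in M^2 \subseteq \alpha_j D$, so $\beta = \alpha_j\epsilon$ for some $\epsilon$; substituting $\alpha_j = \beta\gamma$ and cancelling $\beta \neq 0$ in the domain forces $\gamma\epsilon = 1$, contradicting that $\gamma$ is a nonunit. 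Hence $x$ is a unit times a product of irreducibles and $D$ is atomic. For the second conclusion, apply the same setup to an irreducible element $a$: the factorization $a = u\alpha_1\cdots\alpha_n$ with each $\alpha_j$ nonunit forces $n = 1$ by irreducibility of $a$, so $aD = I_1$ is an $OA$-ideal.

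For the final statement, I would split on $\dim(D)$. If $\dim(D) \geq 2$, then Proposition~\ref{prop4} yields that $D$ is a unique factorization domain, in which case every height-one prime is generated by a prime element $p$ and $\bigcap_{n\in\mathbb{N}} (pD)^n = \boldsymbol{0}$ is standard via unique factorization. If $\dim(D) \leq 1$, then either there are no height-one primes or $P = M$; assuming a nonzero (hence nonunit) $y \in \bigcap_n M^n$ for contradiction, factor $yD = \delta_1D\cdots\delta_kD$ as above into principal $OA$-ideals. Since $\dim(D) = 1$, every nonzero prime of $D$ equals $M$, so each $\delta_i D$ satisfies $M^2 \subseteq \delta_i D \subseteq M$ regardless of which case of Lemma~\ref{lem1}(2) applies. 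Multiplying these containments gives $yD \supseteq M^{2k}$, while $y \in M^{2k+1}$ gives $yD \subseteq M^{2k+1} \subseteq M^{2k}$, forcing $yD = M^{2k} = M^{2k+1} = M\cdot yD$. Since $yD$ is principal (hence finitely generated), Nakayama's lemma yields $yD = \boldsymbol{0}$, contradicting $y \neq 0$. The main obstacle is precisely this final assembly: one needs the uniform containment $M^2 \subseteq \delta_i D$, which hinges on the one-dimensional structure, in order to sandwich $yD$ between two consecutive powers of $M$ and trigger Nakayama.
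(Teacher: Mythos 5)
Your proof is correct, and its overall skeleton (backward direction from atomicity; forward direction by writing a nonzero principal ideal as a product of principal $OA$-ideals via invertibility; the intersection statement by a case split and a sandwich between powers of $M$) matches the paper's, but your atomicity argument takes a genuinely different route. The paper distinguishes whether $D$ is a unique factorization domain; if not, it invokes Proposition~\ref{prop4} to get $\dim(D)=1$, shows $M^2$ cannot be principal (else $M$ would be invertible and $D$ a DVR), and concludes that the generator of each non-prime $OA$-factor lies outside $M^2$ and is therefore irreducible. You avoid Proposition~\ref{prop4} and the UFD/dimension dichotomy altogether: your cancellation argument (if $\alpha=\beta\gamma$ with $\beta,\gamma$ nonunits and $\beta\in M^2\subseteq\alpha D$, then $\gamma$ is a unit) shows directly that the generator of a principal $OA$-ideal with $M^2\subseteq\alpha D\subseteq M$ is a product of at most two irreducibles, which suffices for atomicity; this is more elementary and self-contained, at the harmless cost of a weaker per-factor conclusion (``at most two irreducibles'' rather than ``irreducible''). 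The step that an irreducible $a$ gives $n=1$ and $aD$ equal to a single $OA$-factor is the same idea in both proofs (the paper phrases it via maximality of $aD$ among proper principal ideals). For the final statement the differences are minor: in the non-maximal height-one case the paper cites \cite[Theorem 2.2(1)]{AMN} to see that $\bigcap_{n\in\mathbb{N}}P^n$ is a prime properly contained in $P$, whereas you use the standard $p$-adic argument in a UFD; in the case $P=M$ the paper derives $xD=M^{2m}=M^{4m}=x^2D$ and concludes $x$ is a unit, whereas you derive $yD=MyD$ and apply Nakayama's lemma—and you are in fact more explicit than the paper about why every $OA$-factor contains $M^2$ when $\dim(D)=1$ (a nonzero prime factor must equal $M$), a point the paper leaves implicit.
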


\begin{proof} ($\Rightarrow$) Let each proper principal ideal of $D$ have an $OA$-factorization. If $D$ is a unique factorization domain, then $D$ is atomic and each irreducible element generates a prime ideal. Now let $D$ be not a unique factorization domain. Then $\dim(D)=1$ by Proposition~\ref{prop4}.

Assume that $M^2$ is principal. Then $M$ is invertible, and hence $M$ is principal (since $D$ is local). Note that $D$ is a $DVR$ (since $\dim(D)=1$), and hence $D$ is a unique factorization domain, a contradiction.

We infer that $M^2$ is not principal. We show that $D$ is atomic. Let $y\in D$ be a nonzero nonunit. Then $yD=\prod_{i=1}^n I_i$ for some principal $OA$-ideals $I_i$. There are nonzero nonunits $x_i\in D$ such that $y=\prod_{i=1}^n x_i$ and $I_j=x_jD$ for each $j\in [1,n]$. Let $i\in [1,n]$. If $I_i$ is a prime ideal, then $x_i$ is a prime element, and thus $x_i$ is irreducible. Now let $I_i$ not be a prime ideal. It follows from Lemma~\ref{lem1}(2) that $M^2\subseteq I_i$. Since $M^2$ is not principal, we have that $x_i\not\in M^2$. Therefore, $x_i$ is irreducible.

Finally, let $z\in D$ be irreducible. Then $zD=\prod_{j=1}^m J_j$ for some principal $OA$-ideals $J_j$. Since $zD$ is maximal among the proper principal ideals of $D$, we obtain that $zD=J_j$ for some $j\in [1,n]$.

($\Leftarrow$) Let $D$ be atomic such that each irreducible element generates an $OA$-ideal. Let $I$ be a proper principal ideal of $D$. Without restriction let $I$ be nonzero. Then $I=xD$ for some nonzero nonunit $x\in D$. Observe that $x=\prod_{i=1}^n x_i$ for some irreducible elements $x_i\in D$. It follows that $\prod_{i=1}^n x_iD$ is an $OA$-factorization of $I$.

Now let the equivalent conditions be satisfied and let $P$ be a height-one prime ideal of $D$. First let $P\not=M$. Then $D$ is a unique factorization domain by Proposition~\ref{prop4}, and hence $P$ is principal. Therefore, $\bigcap_{n\in\mathbb{N}} P^n$ is a prime ideal of $D$ by \cite[Theorem 2.2(1)]{AMN}. Since $\bigcap_{n\in\mathbb{N}} P^n\subset P$, we infer that $\bigcap_{n\in\mathbb{N}} P^n=\boldsymbol{0}$.

Now let $P=M$. Assume that $\bigcap_{n\in\mathbb{N}} M^n\not=\boldsymbol{0}$ and let $x\in\bigcap_{n\in\mathbb{N}} M^n$ be nonzero. Then $xD$ is a product of $m$ $OA$-ideals of $D$ for some positive integer $m$. We infer by Lemma~\ref{lem1}(2) that $M^{2m}\subseteq xD$, and hence $M^{2m}\subseteq xD\subseteq M^{4m}\subseteq M^{2m}$. This implies that $xD=M^{2m}=M^{4m}=x^2D$, and thus $x$ is a unit of $D$, a contradiction. Therefore, $\bigcap_{n\in\mathbb{N}} M^n=\boldsymbol{0}$.
\end{proof}

\begin{lemma}\label{lem5} Let $R$ be a local ring with maximal ideal $M$ such that $M^2$ is divided and such that either $M$ is nilpotent or $R$ is an integral domain with $\bigcap_{n\in\mathbb{N}} M^n=\boldsymbol{0}$. Then $R$ is an $OAF$-ring and every proper principal ideal of $R$ is a product of principal $OA$-ideals.
\end{lemma}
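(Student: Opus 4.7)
My plan is to handle both assertions via a single iteration scheme. First I would dispose of $M=\boldsymbol{0}$ (then $R$ is a field and both claims are vacuous) and verify $M\ne M^2$: if $M=M^2$ then $M=M^{2^n}$ for all $n$, which combined with either nilpotency of $M$ or $\bigcap_n M^n=\boldsymbol{0}$ forces $M=\boldsymbol{0}$. Fix $y\in M\setminus M^2$; since $yR\not\subseteq M^2$ and $M^2$ is divided, $M^2\subseteq yR\subseteq M$, so by Lemma~\ref{lem1}(2) the ideal $yR$ is a principal $OA$-ideal. For each nonzero ideal $J$ of $R$ I introduce the invariant $n(J)=\sup\{n\ge 0:J\subseteq M^n\}$, which is finite under both hypotheses (otherwise $J\subseteq\bigcap_n M^n=\boldsymbol{0}$).

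Next I would split into cases according to the position of a proper ideal $I$ relative to $M^2$. If $M^2\subseteq I\subseteq M$, then $I$ is already $OA$ by Lemma~\ref{lem1}(2). If $I=\boldsymbol{0}$, then in the nilpotent case $\boldsymbol{0}=(yR)^k$ with $k$ the nilpotency index of $M$ (since $y^k\in M^k=\boldsymbol{0}$), while in the domain case $\boldsymbol{0}$ is itself a principal prime, hence a principal $OA$-ideal. The essential case is $I\ne\boldsymbol{0}$ with $I\subseteq M^2$. Here $I\subseteq yR$, and a routine check shows $I=yR\cdot(I:y)$. Writing $I_1=(I:y)$, the inclusion $I=yI_1\subseteq M^{1+n(I_1)}$ yields $n(I_1)\le n(I)-1$, while $I_1\supseteq I$ keeps $I_1$ nonzero. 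Iterating terminates in finitely many steps, producing $I=(yR)^j\cdot I_j$ with $I_j\not\subseteq M^2$. By divided-ness, $M^2\subseteq I_j$, and either $I_j=R$ (so $I=(yR)^j$) or $I_j$ is a proper $OA$-ideal, giving the required $OA$-factorization.

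To upgrade to principal $OA$-factorizations when $I=xR$ with $x\in M^2$ nonzero, I run the same procedure elementwise. Choosing $t\in R$ with $x=yt$ gives $xR=yR\cdot tR$, and the same product estimate yields $n(t)\le n(x)-1$, with $t\ne\boldsymbol{0}$ since $x\ne\boldsymbol{0}$. Iteration terminates when $t_j$ is either a unit (absorbed into $R$) or lies in $M\setminus M^2$ (yielding a final principal $OA$ factor $t_jR$). The case $xR\not\subseteq M^2$ is handled directly since $xR$ is then itself a principal $OA$-ideal, and the principal case for $I=\boldsymbol{0}$ is covered by the formulas above. The main technical hurdle is ensuring termination: this rests on the strict decrease of $n(\cdot)$ along the chain $I=I_0\subseteq I_1\subseteq I_2\subseteq\cdots$ of nonzero ideals, which relies crucially on the integer-valuedness of $n$ guaranteed by the two dichotomous hypotheses on $M$.
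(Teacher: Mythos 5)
Your argument is correct and follows essentially the same route as the paper's proof: fix a generator of a principal $OA$-ideal in $M\setminus M^2$, use dividedness of $M^2$ together with the colon-ideal identity $I=y(I:y)$ to peel off principal factors, and use finiteness of the largest $n$ with $I\subseteq M^n$ (guaranteed by nilpotency or $\bigcap_n M^n=\boldsymbol{0}$) to terminate with a cofactor containing $M^2$, which is an $OA$-ideal by Lemma~\ref{lem1}(2). The only difference is presentational: the paper extracts the whole power $(xR)^{n-1}$ at once via the identity $M^n=x^{n-1}M$ and a maximality contradiction, whereas you iterate one factor at a time with a strictly decreasing invariant, which amounts to the same argument.
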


\begin{proof} If $M$ is idempotent, then $M=\boldsymbol{0}$, and hence $R$ is a field and both statements are clearly satisfied. Now let $M$ be not idempotent. There is some $x\in M\setminus M^2$. In what follows, we freely use the fact that if $N$ is an ideal of $R$ and $z\in R$ such that $N\subseteq zR$, then $N=z(N:zR)$, and hence $N=zJ$ for some ideal $J$ of $R$.

Next we prove that $M^2=xM$ and $xR$ is an $OA$-ideal of $R$. Since $x\not\in M^2$ and $M^2$ is divided, we have that $M^2\subseteq xR\subseteq M$. Therefore, $xR$ is an $OA$-ideal by Lemma~\ref{lem1}(2). Since $M^2\subset xR$, there is some proper ideal $J$ of $R$ with $M^2=xJ$, and thus $M^2\subseteq xM$. Obviously, $xM\subseteq M^2$, and hence $M^2=xM$.

Now we show that $R$ is an $OAF$-ring. Let $I$ be a proper ideal of $R$. First let $I=\boldsymbol{0}$. If $M$ is nilpotent, then $I$ is obviously a product of $OA$-ideals. If $R$ is an integral domain, then $I$ is an $OA$-ideal. Now let $I$ be nonzero. In any case there is a largest positive integer $n$ such that $I\subseteq M^n$. Observe that $I\subseteq M^n=x^{n-1}M\subseteq x^{n-1}R$. Consequently, $I=x^{n-1}L=(xR)^{n-1}L$ for some proper ideal $L$ of $R$. Assume that $L\subseteq M^2$. Note that $L\subseteq M^2=xM\subseteq xR$. This implies that $L=xA$ for some proper ideal $A$ of $R$, and hence $I=x^nA\subseteq x^nM=M^{n+1}$, a contradiction. We infer that $M^2\subseteq L$ (since $M^2$ is divided). It follows from Lemma~\ref{lem1}(2) that $L$ is an $OA$-ideal. In any case, $I$ is a product of $OA$-ideals.

Finally, we prove that every proper principal ideal of $R$ is a product of principal $OA$-ideals. Let $y\in M$. First let $y=0$. If $M$ is nilpotent, then $x^k=0$ for some $k\in\mathbb{N}$, and thus $yR=(xR)^k$ is a product of principal $OA$-ideals. If $R$ is an integral domain, then $yR$ is a principal $OA$-ideal. Now let $y$ be nonzero. There is some greatest $\ell\in\mathbb{N}$ such that $y\in M^{\ell}$. Therefore, $y=x^{\ell-1}z$ for some $z\in M$. If $z\in M^2$, then $z=xv$ for some $v\in M$, and hence $y=x^{\ell}v\in M^{\ell+1}$, a contradiction. We infer that $z\not\in M^2$, and thus $M^2\subseteq zR\subseteq M$. It follows from Lemma~\ref{lem1}(2) that $zR$ is an $OA$-ideal of $R$. Consequently, $yR=(xR)^{\ell-1}(zR)$ is a product of principal $OA$-ideals.
\end{proof}

\section{Characterization of $OAF$-rings and related concepts}

First we recall several definitions and discuss the factorization theoretical properties of local one-dimensional $OAF$-domains. Let $D$ be an integral domain with quotient field $K$. Then $\widehat{D}=\{x\in K\mid$ there is some nonzero $c\in D$ such that $cx^n\in D$ for all $n\in\mathbb{N}\}$ is called the {\it complete integral closure} of $D$. Let $(D:\widehat{D})=\{x\in D\mid x\widehat{D}\subseteq D\}$ be the {\it conductor} of $D$ in $\widehat{D}$. The domain $D$ is called {\it completely integrally closed} if $D=\widehat{D}$ and $D$ is said to be {\it seminormal} if for all $x\in K$ such that $x^2,x^3\in D$, it follows that $x\in D$. Note that every completely integrally closed domain is seminormal. We say that $D$ is a {\it finitely primary domain of rank one} if $D$ is a local one-dimensional domain such that $\widehat{D}$ is a $DVR$ and $(D:\widehat{D})\not=\boldsymbol{0}$. For each subset $X\subseteq K$ let $X^{-1}=\{x\in K\mid xX\subseteq D\}$ and $X_v=(X^{-1})^{-1}$. An ideal $I$ of $D$ is called {\it divisorial} if $I_v=I$. Moreover, $D$ is called a {\it Mori domain} if $D$ satisfies the ascending chain condition on divisorial ideals. It is well known that every unique factorization domain and every Noetherian domain is a Mori domain (see \cite[Corollary 2.3.13]{GH} and \cite[page 57]{BA}). We say that $D$ is {\it half-factorial} if $D$ is atomic and each two factorizations of each nonzero element of $D$ into irreducible elements are of the same length. Finally, $D$ is called a {\it $C$-domain} if the monoid of nonzero elements of $D$ (i.e., $D\setminus\boldsymbol{0}$) is a $C$-monoid. For the precise definition of $C$-monoids we refer to \cite[Definition 2.9.5]{GH}.

Let $D$ be a local domain with quotient field $K$ and maximal ideal $M$. Set $(M:M)=\{x\in K\mid xM\subseteq M\}$. Then $(M:M)$ is called the {\it ring of multipliers} of $M$. Moreover, $M^2$ is said to be {\it universal} if $M^2\subseteq uD$ for each irreducible element $u\in D$.

\begin{theorem}\label{thm3} Let $D$ be a local domain with maximal ideal $M$ such that $D$ is not a field. The following statements are equivalent.
\begin{enumerate}
\item[\textnormal{(1)}] $D$ is an $OAF$-domain.
\item[\textnormal{(2)}] $D$ is a $TAF$-domain.
\item[\textnormal{(3)}] $D$ is one-dimensional and every proper principal ideal has an $OA$-factorization.
\item[\textnormal{(4)}] $D$ is one-dimensional and atomic and every irreducible element generates an $OA$-ideal.
\item[\textnormal{(5)}] $D$ is atomic such that $M^2$ is universal.
\item[\textnormal{(6)}] $(M:M)$ is a $DVR$ with maximal ideal $M$.
\item[\textnormal{(7)}] $D$ is a seminormal finitely primary domain of rank one.
\end{enumerate}
If these equivalent conditions are satisfied, then $D$ is a half-factorial $C$-domain and a Mori domain.
\end{theorem}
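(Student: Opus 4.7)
My plan is to prove the theorem via the cyclic chain $(1)\Rightarrow(2)\Rightarrow(3)\Rightarrow(4)\Rightarrow(5)\Rightarrow(6)\Rightarrow(7)\Rightarrow(1)$ and then to deduce the concluding half-factorial, $C$-domain, and Mori properties from condition~(7).

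The implications (1)--(4) unfold nearly directly. $(1)\Rightarrow(2)$ is Lemma~\ref{lem1}(3). For $(2)\Rightarrow(3)$, any $TAF$-ring has Krull dimension at most one (cf.\ \cite{MAD}), so $\dim(D)=1$ since $D$ is not a field; in a one-dimensional local domain any proper nonzero $TA$-ideal $I$ satisfies $\sqrt{I}=M$, hence $M^2\subseteq I$ by \cite[Theorem 2.4]{B}, which by Lemma~\ref{lem1}(2) makes $I$ an $OA$-ideal; so every $TA$-factorization of a proper principal ideal is already an $OA$-factorization. $(3)\Rightarrow(4)$ is Lemma~\ref{lem4}. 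For $(4)\Rightarrow(5)$, for any irreducible $u\in D$ the $OA$-ideal $uD$ is either a nonzero prime (which in dimension one forces $uD=M\supseteq M^2$) or already satisfies $M^2\subseteq uD$ by Lemma~\ref{lem1}(2), so $M^2$ is universal.

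The main technical step is $(5)\Rightarrow(6)$. Writing $T=(M:M)$, I would first establish that $M=uT$ for every irreducible $u$. The inclusion $u^{-1}M\subseteq T$ is proved as follows: for $x,y\in M$ we have $xy\in M^2\subseteq uD$, so $u^{-1}xy\in D$, and if $u^{-1}xy$ were a unit of $D$ then $u=xy\cdot(u^{-1}xy)^{-1}$ would factor $u$ as a product of two nonunits, contradicting irreducibility. Multiplication by $u$ gives $M\subseteq uT$, while $uT\subseteq M$ is immediate since $M$ is an ideal of $T$. Next, using atomicity, for any nonzero $y\in T$ the element $yu$ is a nonzero element of $M$, which factors as $yu=v_1\cdots v_k$ into irreducibles of $D$ with $k\geq 1$; since $v_iT=uT$ for each $i$, each $v_i/u$ is a unit of $T$, and hence $y=(v_1/u)v_2\cdots v_k$ is either a unit of $T$ (when $k=1$) or lies in $M$ (when $k\geq 2$). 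This simultaneously yields that $T$ is local with maximal ideal $M$ and that $\bigcap_{n\in\mathbb{N}}u^nT=\boldsymbol{0}$, so $T$ is a $DVR$.

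For $(6)\Rightarrow(7)$, one obtains $T=\widehat{D}$ via $D\subseteq T\subseteq\widehat{D}\subseteq\widehat{T}=T$ (a $DVR$ is completely integrally closed), the conductor is nonzero since $M\subseteq(D:T)$, and $\dim(D)=1$ follows from the standard prime-correspondence through a nonzero conductor; seminormality is verified directly by noting that $x^2,x^3\in D$ forces $v(x)\geq 0$, so $x\in T$, and then either $v(x)\geq 1$ giving $x\in M\subseteq D$ or $v(x)=0$ giving $x=x^3(x^2)^{-1}\in D$. For $(7)\Rightarrow(1)$, seminormality combined with the rank-one condition forces $M$ to coincide with the maximal ideal of $\widehat{D}$, so $M$ is principal over $\widehat{D}$; Lemma~\ref{lem5} then applies because $M^2$ is divided (any ideal $I\not\subseteq M^2$ contains some $x$ with $v(x)=1$, whence $M\subseteq xT$ and $y/x\in M\subseteq D$ for $y\in M^2$, giving $M^2\subseteq xD\subseteq I$) and $\bigcap_{n\in\mathbb{N}}M^n=\boldsymbol{0}$ is inherited from $\widehat{D}$. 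Finally, half-factoriality holds since every irreducible $u\in D$ has $v(u)=1$, forcing any two factorizations of a nonzero $x\in D$ to have common length $v(x)$; the $C$-domain and Mori properties are standard structural consequences of the seminormal finitely primary rank-one structure, cf.\ \cite{GH}. I expect the main obstacle to be $(5)\Rightarrow(6)$: extracting the $DVR$ structure of $(M:M)$ from merely atomicity and universality of $M^2$ requires tightly coupling the irreducibility-based contradiction (to produce $M=uT$) with atomic factorization transferred to $T$ (to obtain simultaneously the locality of $T$ and the vanishing of $\bigcap u^nT$).
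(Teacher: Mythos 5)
Your proposal is correct, but it follows a genuinely different route from the paper at the decisive points. The paper does not arrange the proof as a single cycle: it proves (1)$\Rightarrow$(2), (1)$\Rightarrow$(3), (3)$\Leftrightarrow$(4), (4)$\Rightarrow$(5), and then outsources the hard core (2)$\Leftrightarrow$(5)$\Leftrightarrow$(6) to the $TAF$-domain characterization \cite[Theorem 4.3]{MAD}; it closes the loop via (5)+(6)$\Rightarrow$(1) (using \cite[Theorem 5.1]{AMO} to get that $M^2$ is divided, then Lemma~\ref{lem5}) and (5)+(6)$\Rightarrow$(7), (7)$\Rightarrow$(6) (citing \cite[Lemma 3.3.3]{GKR}). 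You instead prove the core steps from scratch: your (2)$\Rightarrow$(3) uses only the dimension bound for $TAF$-rings plus Badawi's radical theorem \cite[Theorem 2.4]{B} and Lemma~\ref{lem1}(2), and your (5)$\Rightarrow$(6) is a self-contained analysis of $T=(M:M)$ showing $M=uT$ for every irreducible $u$ (via the universality-plus-irreducibility contradiction) and that every nonzero element of $T$ is $u^{k-1}$ times a unit of $T$, which simultaneously gives locality, principality of the maximal ideal, and $\bigcap_n u^nT=\boldsymbol{0}$; this argument is sound and replaces the appeal to \cite{MAD}. Likewise, in (7)$\Rightarrow$(1) you derive dividedness of $M^2$ directly from the valuation on $\widehat{D}$ rather than from \cite[Theorem 5.1]{AMO}, and you prove half-factoriality by the value-length argument instead of citing \cite[Theorem 6.2]{AMO}. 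What each approach buys: yours is more self-contained and structurally explicit (and your cyclic layout forces each implication to stand alone, e.g.\ you must and do get $\dim(D)=1$ in (6)$\Rightarrow$(7) from the conductor rather than from universality as the paper does); the paper's is shorter because it leans on the known $TAF$ and Cohen--Kaplansky machinery.

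Two caveats. In (7)$\Rightarrow$(1) the assertion that seminormality and rank one force $M$ to be the maximal ideal of $\widehat{D}$ is exactly \cite[Lemma 3.3.3]{GKR}, which the paper cites; you state it without proof or reference, though it can be proved in a few lines (seminormality plus a nonzero conductor gives $N\subseteq D$, and $N$ is then a nonzero prime of the one-dimensional local $D$, hence $N=M$). More seriously glossed is the final clause: calling the $C$-domain and Mori properties ``standard structural consequences'' hides a real subtlety, since the usual $C$-domain criteria require $\widehat{D}^\times/D^\times$ to be finite, which fails for examples like $K+XL[\![X]\!]$ with $[L:K]=\infty$; the paper instead verifies $\widehat{D}^\times M\subseteq M$ and invokes the specific results \cite[Corollary 2.8]{HHK}, \cite[Corollary 2.9.8]{GH} and \cite[Proposition 2.5.1]{HHK}, and your write-up would need those precise references (or an argument) at this point.
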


\begin{proof} (1) $\Rightarrow$ (2): This follows from Lemma~\ref{lem1}(3).

(1) $\Rightarrow$ (3): By Theorem~\ref{thm2}, $D$ is one-dimensional. The rest of assertion (3) is clear.

(2) $\Leftrightarrow$ (5) $\Leftrightarrow$ (6): This follows from \cite[Theorem 4.3]{MAD}.

(3) $\Leftrightarrow$ (4): This is an immediate consequence of Lemma~\ref{lem4}.

(4) $\Rightarrow$ (5): Let $y\in D$ be an irreducible element. Since $yD$ is an $OA$-ideal and $\sqrt{yD}=M$, we deduce from Lemma~\ref{lem1}(2) that $M^2\subseteq yD$. Hence $M^2$ is universal.

(5)+(6) $\Rightarrow$ (1): It follows from \cite[Theorem 5.1]{AMO} that $M^2$ is comparable to every principal ideal of $D$, and thus $M^2$ is divided. Since $(M:M)$ is a $DVR$ with maximal ideal $M$, we have that $\bigcap_{n\in\mathbb{N}} M^n=\boldsymbol{0}$. Consequently, $D$ is an $OAF$-domain by Lemma~\ref{lem5}.

(5)+(6) $\Rightarrow$ (7): First we show that $D$ is finitely primary of rank one. Let $P$ be a nonzero prime ideal of $D$. Then $P$ contains an irreducible element $y\in D$, and hence $M^2\subseteq yD\subseteq P$. Therefore, $P=M$, and thus $D$ is one-dimensional. It remains to show that $\widehat{D}$ is a $DVR$ and $(D:\widehat{D})\not=\boldsymbol{0}$. Since $(M:M)$ is a $DVR$, we have that $(M:M)$ is completely integrally closed. Observe that $D\subseteq (M:M)\subseteq\widehat{D}$, and hence $\widehat{D}\subseteq\widehat{(M:M)}=(M:M)$. Therefore, $\widehat{D}=(M:M)$ is a $DVR$. Since $M\widehat{D}=M(M:M)\subseteq M\subseteq D$ and $M\not=\boldsymbol{0}$, we infer that $(D:\widehat{D})\not=\boldsymbol{0}$.

Next we show that $D$ is seminormal. Let $V$ be the group of units of $\widehat{D}$. Let $K$ be the field of quotients of $D$ and let $x\in K$ be such that $x^2,x^3\in D$. Then $x^2,x^3\in\widehat{D}$. Since $\widehat{D}$ is a $DVR$, $\widehat{D}$ is seminormal, and thus $x\in\widehat{D}$. In particular, $x\in M$ or $x\in V$. If $x\in M$, then $x\in D$. Now let $x\in V$. Note that $V\cap D$ is the group of units of $D$ (by \cite[Corollary 1.4]{O} and \cite[Proposition 2.1]{BD}), and thus $x^2$ and $x^3$ are units of $D$. Therefore, $x=x^{-2}x^3$ is a unit of $D$, and hence $x\in D$.

(7) $\Rightarrow$ (6): By \cite[Lemma 3.3.3]{GKR}, we have that $M$ is the maximal ideal of $\widehat{D}$. If $x\in\widehat{D}$, then $xM\subseteq M$ (since $M$ is an ideal of $\widehat{D}$). It is straightforward to show that $(M:M)\subseteq\widehat{D}$. We infer that $(M:M)=\widehat{D}$ is a $DVR$.

Now let the equivalent statements of Theorem~\ref{thm3} be satisfied. It remains to show that $D$ is a half-factorial $C$-domain and a Mori domain. It follows from \cite[Theorem 6.2]{AMO} that $D$ is a half-factorial domain. Obviously, $V$ is a subgroup of finite index of $V$ and $VM\subseteq\widehat{D}M=(M:M)M\subseteq M$. It follows from \cite[Corollary 2.8]{HHK} and \cite[Corollary 2.9.8]{GH} that $D$ is a $C$-domain. Moreover, $D$ is a Mori domain by \cite[Proposition 2.5.1]{HHK}.
\end{proof}

We want to point out that a local one-dimensional $OAF$-domain need not be Noetherian. Let $K\subseteq L$ be a field extension such that $[L:K]=\infty$ and let $D=K+XL[\![X]\!]$. Then $D$ is a local one-dimensional domain with maximal ideal $M=XL[\![X]\!]$ and $(M:M)=L[\![X]\!]$ is a $DVR$ with maximal ideal $M$. Consequently, $D$ is an $OAF$-domain by Theorem~\ref{thm3}. Since $[L:K]=\infty$, it follows that $D$ is not Noetherian.

An integral domain $D$ is called a {\it Cohen-Kaplansky domain} if $D$ is atomic and $D$ has only finitely many irreducible elements up to associates. It follows from \cite[Example 6.7]{AMO} that there exists a local half-factorial Cohen-Kaplansky domain with maximal ideal $M$ for which $M^2$ is not universal. We infer by Theorem~\ref{thm3} that the aforementioned domain is not an $OAF$-domain.

\begin{theorem}\label{thm4} Let $R$ be a ring with Jacobson radical $M$. The following statements are equivalent.
\begin{enumerate}
\item[\textnormal{(1)}] $R$ is an $OAF$-ring.
\item[\textnormal{(2)}] Each proper $2$-generated ideal of $R$ has an $OA$-factorization.
\item[\textnormal{(3)}] $\dim(R)\leq 1$ and each proper principal ideal has an $OA$-factorization.
\item[\textnormal{(4)}] $R$ satisfies one of the following conditions.
\begin{enumerate}
\item[\textnormal{(A)}] $R$ is a general $ZPI$-ring.
\item[\textnormal{(B)}] $R$ is a local domain, $M^2$ is divided and $\bigcap_{n\in\mathbb{N}} M^n=\boldsymbol{0}$.
\item[\textnormal{(C)}] $R$ is local, $M^2$ is divided and $M$ is nilpotent.
\end{enumerate}
\end{enumerate}
\end{theorem}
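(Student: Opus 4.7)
I will cycle (1) $\Rightarrow$ (2) $\Rightarrow$ (3) $\Rightarrow$ (4) $\Rightarrow$ (1). The first two implications are immediate: proper $2$-generated ideals are proper ideals, principal ideals are $2$-generated, and Theorem~\ref{thm2} supplies $\dim(R)\leq 1$. For (4) $\Rightarrow$ (1), case (A) is trivial since prime ideals are $OA$-ideals, while cases (B) and (C) are exactly the hypotheses of Lemma~\ref{lem5}, which then produces an $OA$-factorization for every proper ideal.

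The substantive implication is (3) $\Rightarrow$ (4), which I split by whether $R$ is local. \emph{Non-local case.} By Remark~\ref{rem1}(2), $R$ is a $\pi$-ring, and $\dim(R)\leq 1$ by hypothesis. Invoking the structure theorem decomposing any $\pi$-ring as a finite direct product of $\pi$-domains and SPIRs, together with the fact that a $\pi$-domain of Krull dimension at most one is a Dedekind domain, I conclude that $R$ is a finite direct product of Dedekind domains and SPIRs, hence a general $ZPI$-ring, i.e., (4)(A).

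\emph{Local case.} Write $M$ for the maximal ideal. I first establish that $M^2$ is divided. Given a nonunit $x\in R\setminus M^2$, the principal ideal $xR$ cannot be a product of two or more proper $OA$-ideals (the product would lie in $M^2$), so $xR$ itself is a proper $OA$-ideal. By Lemma~\ref{lem1}(2), either $M^2\subseteq xR$ directly, or $xR$ is prime; in the latter case $x\ne 0$ together with $\dim(R)\leq 1$ (and, in the $\dim(R)=1$ subcase, $R$ being a domain by Proposition~\ref{prop4}) forces $xR=M$, so $M$ is principal and $M^2=xM\subseteq xR$. The cases $x\in M^2$ and $x$ a unit are trivial, so $M^2$ is comparable to every principal ideal and hence divided. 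Now if $\dim(R)=1$, Proposition~\ref{prop4} gives that $R$ is a domain and Lemma~\ref{lem4} supplies $\bigcap_{n\in\mathbb{N}} M^n=\boldsymbol{0}$, yielding (4)(B). If $\dim(R)=0$ and $R$ is not a field (otherwise (A) holds trivially), write $\boldsymbol{0}=\prod_{i=1}^n I_i$ as an $OA$-factorization; each proper $OA$-factor is either $M$ (the unique prime) or contains $M^2$ by Lemma~\ref{lem1}(2), with the degenerate case $I_i=\boldsymbol{0}$ already forcing $M^2=\boldsymbol{0}$. In every case $M^{2n}\subseteq\prod I_i=\boldsymbol{0}$, so $M$ is nilpotent, giving (4)(C).

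The principal obstacle is the non-local case: closing the gap from ``$\pi$-ring with $\dim(R)\leq 1$'' to ``general $ZPI$-ring'' relies on the decomposition theorem for $\pi$-rings. An alternative route is to pass to each localization $R_M$ and apply the local case, but then globalizing the $OA$-factorizations requires that each $R_M$ fall under type (A), which is not automatic since local $OAF$-rings of type (B) or (C) need not be chained (e.g.\ $K+XL[\![X]\!]$ with $[L:K]=\infty$). A secondary subtlety is that Proposition~\ref{prop4} requires $\dim(R)\ge 1$, so the zero-dimensional subcase must be handled separately via the $OA$-factorization of the zero ideal itself.
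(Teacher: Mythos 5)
Your proposal is correct and follows essentially the same route as the paper: (1)$\Rightarrow$(2)$\Rightarrow$(3) via Theorem~\ref{thm2}, (4)$\Rightarrow$(1) via Lemma~\ref{lem5} plus the trivial $ZPI$ case, and (3)$\Rightarrow$(4) by reducing the $\pi$-ring case to Gilmer's structure theorems and handling the local case through Proposition~\ref{prop4}, Lemma~\ref{lem4} and the $OA$-factorization of $\boldsymbol{0}$. The only (harmless) differences are that you split on local versus non-local where the paper splits on whether every $OA$-ideal is prime, and that you explicitly treat the possibility that $xR$ is prime when showing $M^2$ is divided, a point the paper's appeal to Lemma~\ref{lem1}(2) leaves implicit.
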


\begin{proof} (1) $\Rightarrow$ (2): This is obvious.

(2) $\Rightarrow$ (3): This is an immediate consequence of Theorem~\ref{thm2}.

(3) $\Rightarrow$ (4): First let each $OA$-ideal of $R$ be a prime ideal. Then $R$ is a $\pi$-ring. By \cite[Theorems 39.2, 46.7, and 46.11]{GI}, $R$ is a general $ZPI$-ring. Now let there be an $OA$-ideal of $R$ which is not a prime ideal. It follows from Lemma~\ref{lem1} that $R$ is local with maximal ideal $M$ and $M$ is not idempotent. Note that if $x\in M\setminus M^2$, then $xR$ cannot be a product of more than one $OA$-ideal, and hence $xR$ is an $OA$-ideal.

\textsc{Case} 1: $R$ is zero-dimensional. Let $x\in M\setminus M^2$. Then $xR$ is an $OA$-ideal. We infer by Lemma~\ref{lem1}(2) that $M^2\subseteq xR$. Consequently, $M^2$ is divided. It follows from Lemma~\ref{lem1} that $M^2\subseteq I$ for each $OA$-ideal $I$ of $R$. Since $\boldsymbol{0}$ is a product of $OA$-ideals, we have that $\boldsymbol{0}$ contains a power of $M$. This implies that $M$ is nilpotent.

\textsc{Case} 2: $R$ is one-dimensional. It follows from Proposition~\ref{prop4} that $R$ is an integral domain, and hence $\bigcap_{n\in\mathbb{N}} M^n=\boldsymbol{0}$ by Lemma~\ref{lem4}. It remains to show that $M^2$ is divided. Let $x\in R\setminus M^2$. Without restriction let $x$ be a nonunit. Then $xR$ is an $OA$-ideal. By Lemma~\ref{lem1}(2) we have that $M^2\subseteq xR$.

(4) $\Rightarrow$ (1): Clearly, every general $ZPI$-ring is an $OAF$-ring. The rest follows from Lemma~\ref{lem5}.
\end{proof}

\begin{corollary}\label{cor3} Let $R$ be a ring with Jacobson radical $M$. The following statements are equivalent.
\begin{enumerate}
\item[\textnormal{(1)}] Each proper principal ideal of $R$ has an $OA$-factorization.
\item[\textnormal{(2)}] $R$ is a $\pi$-ring or an $OAF$-ring.
\item[\textnormal{(3)}] $R$ satisfies one of the following conditions.
\begin{enumerate}
\item[\textnormal{(A)}] $R$ is a $\pi$-ring.
\item[\textnormal{(B)}] $R$ is a local domain, $M^2$ is divided and $\bigcap_{n\in\mathbb{N}} M^n=\boldsymbol{0}$.
\item[\textnormal{(C)}] $R$ is local, $M^2$ is divided and $M$ is nilpotent.
\end{enumerate}
\end{enumerate}
\end{corollary}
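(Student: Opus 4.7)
The plan is to parallel the proof of Theorem~\ref{thm4}, with the key twist that (1) is weaker than the hypothesis of Theorem~\ref{thm4} and allows $R$ to be a $\pi$-ring of Krull dimension greater than one without being an $OAF$-ring; this is precisely why the two alternatives in (2) must both appear.

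For the easy directions: (2) $\Rightarrow$ (1) splits by the two alternatives in (2); if $R$ is a $\pi$-ring, then a factorization of a proper principal ideal into primes is an $OA$-factorization (since primes are $OA$-ideals), and if $R$ is an $OAF$-ring then by definition every proper ideal, principal ones included, has an $OA$-factorization. For (3) $\Rightarrow$ (2), (A) is already the $\pi$-ring alternative, while (B) and (C) together are exactly the hypothesis of Lemma~\ref{lem5}, which yields that $R$ is an $OAF$-ring.

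The substantive direction is (1) $\Rightarrow$ (3). I would first dispose of the case where every $OA$-ideal of $R$ is prime: then (1) says every proper principal ideal is a product of primes, giving (A). Otherwise Lemma~\ref{lem1} places us in a local ring $(R,M)$ that carries a non-prime $OA$-ideal $J$, forcing $M^2 \subseteq J \subsetneq M$ and hence $M^2 \neq M$. The standard observation, already used in the proof of Theorem~\ref{thm4}, is that for any $x \in M \setminus M^2$, an $OA$-factorization of $xR$ cannot contain more than one proper factor (else $xR \subseteq M^2$), so $xR$ is itself an $OA$-ideal. I would then split by $\dim(R)$: if $\dim(R) \geq 2$, Proposition~\ref{prop4} forces $R$ to be a local unique factorization domain, hence a $\pi$-ring, and (A) holds; if $\dim(R) = 1$, Proposition~\ref{prop4} gives $R$ a domain and Lemma~\ref{lem4} gives $\bigcap_{n \in \mathbb{N}} M^n = \boldsymbol{0}$, while for $x \in M \setminus M^2$ the $OA$-ideal $xR$ is either prime (hence equal to $M$) or contains $M^2$ by Lemma~\ref{lem1}(2), so in either case $M^2 \subseteq xR$ and $M^2$ is divided, yielding (B); if $\dim(R) = 0$, the same dichotomy gives $M^2$ divided, and any $OA$-factorization $\boldsymbol{0} = \prod_{i=1}^n I_i$ has each $I_i$ either prime (equal to $M$) or containing $M^2$, so $\boldsymbol{0} \supseteq M^{2n}$, giving $M$ nilpotent and (C).

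The main obstacle, compared with Theorem~\ref{thm4}, is that we cannot invoke Theorem~\ref{thm2} to bound $\dim(R)$ by one, since that result requires $OA$-factorizations for all proper $2$-generated ideals rather than only principal ones. The resolution is to genuinely allow the case $\dim(R) \geq 2$ and to exploit Proposition~\ref{prop4} to show that this case lands in the $\pi$-ring alternative (A), which is precisely what distinguishes this corollary from Theorem~\ref{thm4}.
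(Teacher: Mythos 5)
Your proposal is correct and takes essentially the same route as the paper: the paper likewise settles the non-local (equivalently, all-$OA$-ideals-prime) case via Remark~\ref{rem1}(2), the local case with $\dim(R)\geq 2$ via Proposition~\ref{prop4} (unique factorization domain, hence $\pi$-ring), and the local case with $\dim(R)\leq 1$ by citing Theorem~\ref{thm4}, whose proof contains exactly the dimension-one and dimension-zero arguments you re-derive (dividedness of $M^2$ from principal $OA$-ideals, $\bigcap_{n\in\mathbb{N}} M^n=\boldsymbol{0}$ via Lemma~\ref{lem4}, nilpotence of $M$ from an $OA$-factorization of $\boldsymbol{0}$). The only difference is organizational: you prove (1) $\Rightarrow$ (3) directly and close the cycle through Lemma~\ref{lem5}, whereas the paper proves (1) $\Rightarrow$ (2) and obtains (2) $\Leftrightarrow$ (3) as an immediate consequence of Theorem~\ref{thm4}.
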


\begin{proof} (1) $\Rightarrow$ (2): If $R$ is not local, then $R$ is a $\pi$-ring by Remark~\ref{rem1}(2). Now let $R$ be local. If $\dim(R)\geq 2$, then $R$ is a unique factorization domain by Proposition~\ref{prop4}, and hence $R$ is a $\pi$-ring. If $\dim(R)\leq 1$, then $R$ is an $OAF$-ring by Theorem~\ref{thm4}.

(2) $\Rightarrow$ (1): This is obvious.

(2) $\Leftrightarrow$ (3): This is an immediate consequence of Theorem~\ref{thm4} and the fact that every general $ZPI$-ring is a $\pi$-ring.
\end{proof}

\begin{corollary}\label{cor4} Let $R$ be a ring with Jacobson radical $M$. The following statements are equivalent.
\begin{enumerate}
\item[\textnormal{(1)}] Each proper principal ideal of $R$ is a product of principal $OA$-ideals.
\item[\textnormal{(2)}] $R$ satisfies one of the following conditions.
\begin{enumerate}
\item[\textnormal{(A)}] $R$ is a unique factorization ring.
\item[\textnormal{(B)}] $R$ is a local domain, $M^2$ is divided and $\bigcap_{n\in\mathbb{N}} M^n=\boldsymbol{0}$.
\item[\textnormal{(C)}] $R$ is local, $M^2$ is divided and $M$ is nilpotent.
\end{enumerate}
\end{enumerate}
\end{corollary}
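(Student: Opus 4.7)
The plan is to reduce Corollary~\ref{cor4} to Corollary~\ref{cor3} plus the second conclusion of Lemma~\ref{lem5}, upgrading the $\pi$-ring condition in case (A) of Corollary~\ref{cor3} to the stronger unique factorization ring condition. Cases (B) and (C) are unchanged between the two corollaries, so the only additional work is in case (A). The key observation is that Remark~\ref{rem1}(2)--(3) pinpoints the distinction between ``factorization into $OA$-ideals'' and ``factorization into principal $OA$-ideals'' in the non-local case as exactly the distinction between a $\pi$-ring and a unique factorization ring.

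For (2) $\Rightarrow$ (1), if $R$ is a unique factorization ring then every proper principal ideal of $R$ is, by definition, a product of principal prime ideals, each of which is a principal $OA$-ideal. If (B) or (C) holds, then by the second conclusion of Lemma~\ref{lem5} every proper principal ideal of $R$ is already a product of principal $OA$-ideals.

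For (1) $\Rightarrow$ (2), condition (1) implies the hypothesis of Corollary~\ref{cor3}, so $R$ satisfies one of (A$'$) ``$R$ is a $\pi$-ring'', (B), or (C) there. Cases (B) and (C) transfer verbatim. It remains to upgrade (A$'$) to the unique factorization ring condition. If $R$ is not local, Remark~\ref{rem1}(3) supplies this upgrade immediately. If $R$ is local, I would split on dimension: when $\dim(R) \geq 2$, Proposition~\ref{prop4} forces $R$ to be a unique factorization domain, hence a unique factorization ring; when $\dim(R) \leq 1$, the hypothesis together with $\dim(R) \leq 1$ is precisely condition (3) of Theorem~\ref{thm4}, so $R$ is an $OAF$-ring and falls into one of cases (A), (B), (C) of Theorem~\ref{thm4}. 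The latter two coincide with our (B) and (C), while a local general $ZPI$-ring (case (A) of Theorem~\ref{thm4}) is either a $DVR$ or an $SPIR$, and both are unique factorization rings.

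The main obstacle is the local $\pi$-ring case of dimension at most one, where no direct ``$\pi$-ring $\Rightarrow$ unique factorization ring'' implication is available in the local setting (in contrast to the non-local case handled by Remark~\ref{rem1}(3)). I would circumvent this by routing through Theorem~\ref{thm4}, whose hypothesis (3) is satisfied in this subcase, and then invoking the fact that local general $ZPI$-rings are $DVR$s or $SPIR$s to conclude the unique factorization ring property.
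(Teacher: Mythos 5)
Your proof is correct, and most of it coincides with the paper's argument: the (2) $\Rightarrow$ (1) direction (unique factorization rings give products of principal prime, hence principal $OA$-, ideals; cases (B) and (C) are the second conclusion of Lemma~\ref{lem5}) and the non-local half of (1) $\Rightarrow$ (2) via Remark~\ref{rem1}(3) are exactly the paper's steps. Where you genuinely diverge is the local case of (1) $\Rightarrow$ (2): the paper settles it in one line by combining Corollary~\ref{cor3} with the external fact that every local $\pi$-ring is a unique factorization ring (\cite[Corollary 2.2]{AMA}), whereas you split on the Krull dimension, using Proposition~\ref{prop4} when $\dim(R)\geq 2$ and Theorem~\ref{thm4} when $\dim(R)\leq 1$, and then upgrade case (A) of Theorem~\ref{thm4} by the structure of local general $ZPI$-rings. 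Your route buys independence from the Anderson--Markanda citation and stays closer to results already established in the paper, at the cost of a longer case analysis and of importing instead the structure theorem for general $ZPI$-rings; note that a local general $ZPI$-ring can also be a field, a case you omitted but which is harmless since fields are trivially unique factorization rings, and that this structure fact is itself external (cf. \cite[Corollary 9.11]{LM}, which the paper invokes in the proof of Corollary~\ref{cor7}). Both arguments are complete; the paper's is shorter, yours is a legitimate alternative that recycles the paper's own machinery.
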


\begin{proof} (1) $\Rightarrow$ (2): If $R$ is not local, then $R$ is a unique factorization ring by Remark~\ref{rem1}(3). If $R$ is local, then the statement follows from Corollary~\ref{cor3} and the fact that every local $\pi$-ring is a unique factorization ring (\cite[Corollary 2.2]{AMA}).

(2) $\Rightarrow$ (1): Obviously, if $R$ is a unique factorization ring, then each proper principal ideal of $R$ is a product of principal $OA$-ideals. The rest is an immediate consequence of Lemma~\ref{lem5}.
\end{proof}

In Lemma~\ref{lem1}, we saw that if $R$ is a local ring with maximal ideal $M$ and $I$ is an ideal of $R$ such that $M^2\subseteq I$, then $I$ is an $OA$-ideal of $R$. Now we will give a characterization of the rings for which every proper (principal) ideal is an $OA$-ideal.

\begin{proposition}\label{prop6} Let $R$ be a ring with Jacobson radical $M$. The following statements are equivalent.
\begin{enumerate}
\item[\textnormal{(1)}] Every proper ideal of $R$ is an $OA$-ideal.
\item[\textnormal{(2)}] Every proper principal ideal of $R$ is an $OA$-ideal.
\item[\textnormal{(3)}] $R$ is local and $M^2=\boldsymbol{0}$.
\end{enumerate}
\end{proposition}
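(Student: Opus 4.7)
The plan is to handle (1) $\Rightarrow$ (2) as a triviality, (3) $\Rightarrow$ (1) by a direct appeal to Lemma~\ref{lem1}(2), and concentrate the effort on the main implication (2) $\Rightarrow$ (3). For (3) $\Rightarrow$ (1), if $R$ is local with maximal ideal $M$ and $M^2=\boldsymbol{0}$, then every proper ideal $I$ satisfies $M^2=\boldsymbol{0}\subseteq I\subseteq M$, so $I$ is an $OA$-ideal by Lemma~\ref{lem1}(2). The nontrivial direction will be proved in two stages: first that $R$ is local, then that $M^2=\boldsymbol{0}$.

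To prove $R$ is local I would argue by contradiction. If $R$ were not local, then Lemma~\ref{lem1}(1) would force every $OA$-ideal of $R$ to be prime, so by hypothesis every proper principal ideal would be prime. Applying this to $\boldsymbol{0}$ shows that $R$ is a domain, and if $R$ were not a field then for any nonzero nonunit $a$ the prime ideal $(a^2)$ would contain $a\cdot a$, forcing $a=a^2b$ and hence $a(1-ab)=0$; cancellation in a domain would then make $a$ a unit, a contradiction. So $R$ would have to be a field, which is local, contradicting non-locality.

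With $R$ local and $M$ its maximal ideal, I would then turn to $M^2=\boldsymbol{0}$. We may assume $M\neq\boldsymbol{0}$. The proper principal ideal $\boldsymbol{0}$ is $OA$, so by Lemma~\ref{lem1}(2) either $R$ is a domain or $M^2\subseteq\boldsymbol{0}$; in the latter case we are done, so assume $R$ is a local domain with $M\neq\boldsymbol{0}$. For any nonzero $x\in M$, the proper principal ideal $(x^2)$ is $OA$; it cannot be prime, for otherwise $x\cdot x\in(x^2)$ would give $x=x^2r$ and hence $x$ a unit by the same computation as above. Lemma~\ref{lem1}(2) then forces $M^2\subseteq(x^2)$ for every nonzero $x\in M$.

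The finishing move is a short Nakayama argument. Given any $y\in M$ and fixed nonzero $x\in M$, we have $xy\in M^2\subseteq(x^2)$, hence $xy=x^2r$ for some $r\in R$ and so $y=xr\in(x)$ by cancellation. Therefore $M=(x)$ is principal. If $M^2\neq\boldsymbol{0}$, then picking a nonzero $z\in M^2\subseteq M$ and applying the same reasoning yields $M=(z)\subseteq M^2$, so $M=M^2$; Nakayama's lemma (applicable since $M$ is finitely generated) then forces $M=\boldsymbol{0}$, a contradiction. I do not anticipate any serious obstacle in carrying this out; the main care point, which recurs in several sub-steps, is the combined use of the dichotomy in Lemma~\ref{lem1}(2) together with the observation that in a domain $x(1-xr)=\boldsymbol{0}$ with $x\neq\boldsymbol{0}$ makes $x$ a unit.
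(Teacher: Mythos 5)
Your proposal is correct and takes essentially the same route as the paper: Lemma~\ref{lem1}(1) together with the domain-cancellation argument in the non-local case, and the Lemma~\ref{lem1}(2) dichotomy applied to the principal ideals $\boldsymbol{0}$ and $x^2R$ in the local case. The only difference is the finish: the paper picks a nonzero $x\in M^2$ and shows directly that $x^2R=xR$, so $x$ is a unit, whereas you first deduce that $M$ is principal and then apply Nakayama's lemma --- a slightly longer but equivalent ending.
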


\begin{proof} (1) $\Rightarrow$ (2): This is obvious.

(2) $\Rightarrow$ (3): Assume that $R$ is not local. Then every proper principal ideal of $R$ is a prime ideal by Lemma~\ref{lem1}(1). Consequently, $R$ is an integral domain. If $x\in R$ is a nonunit, then $x^2R$ is a prime ideal, and hence $x^2R=xR$ and $x=0$. Therefore, $R$ is a field, a contradiction. This implies that $R$ is local with maximal ideal $M$. We infer by Lemma~\ref{lem1}(2) that $\boldsymbol{0}$ is a prime ideal or $M^2=\boldsymbol{0}$.

Assume that $M^2\not=\boldsymbol{0}$. Then $R$ is an integral domain and there is some nonzero $x\in M^2$. It follow from Lemma~\ref{lem1}(2) that $x^2R$ is a prime ideal or $M^2\subseteq x^2R$. If $x^2R$ is a prime ideal, then $x^2R=xR$. If $M^2\subseteq x^2R$, then $M^2\subseteq x^2R\subseteq xR\subseteq M^2$, and thus $x^2R=xR$. In any case we have that $x^2R=xR$, and hence $x$ is a unit (since $x$ is regular), a contradiction.

(3) $\Rightarrow$ (1): This is an immediate consequence of Lemma~\ref{lem1}(2).
\end{proof}

\section{$OA$-factorization properties and trivial ring extensions}

Let $A$ be a ring and $E$ be an $A$-module. Then $A\propto E$, the {\it trivial} ({\it ring}) {\it extension of} $A$ {\it by} $E$, is the ring whose additive structure is that of the external direct sum $A\oplus E$ and whose multiplication is defined by $(a,e)(b,f)=(ab,af+be)$ for all $a,b\in A$ and all $e,f\in E$. (This construction is also known by other terminology and other notation, such as the {\it idealization} $A(+)E$.) The basic properties of trivial ring extensions are summarized in the textbooks \cite{G,H}. Trivial ring extensions have been studied or generalized extensively, often because of their usefulness in constructing new classes of examples of rings satisfying various properties (cf. \cite{AW,BKM,KM}). We say that $E$ is {\it divisible} if $E=aE$ for each regular element $a\in A$.

We start with the following lemma.

\begin{lemma}\label{lem6} Let $A$ be a ring, $I$ be an ideal of $A$ and $E$ be an $A$-module. Let $R=A\propto E$ be the trivial ring extension of $A$ by $E$.
\begin{enumerate}
\item[\textnormal{(1)}] $I\propto E$ is an $OA$-ideal of $R$ if and only if $I$ is an $OA$-ideal of $A$.
\item[\textnormal{(2)}] Assume that $A$ contains a nonunit regular element and $E$ is a divisible $A$-module. Then the $OA$-ideals of $R$ have the form $L\propto E$ where $L$ is an $OA$-ideal of $A$.
\end{enumerate}
\end{lemma}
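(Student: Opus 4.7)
For part (1), my plan is to exploit the standard fact that an element $(a,e) \in R = A\propto E$ is a unit of $R$ if and only if $a$ is a unit of $A$; indeed, if $ab = 1$ in $A$, then $(a,e)(b,-b^2e) = (1,0)$, and conversely the first coordinate of a product is the product of first coordinates. Given this, the equivalence follows by transporting the $OA$-property through the inclusion $a \mapsto (a,0)$ and the projection $(a,e) \mapsto a$. Explicitly, for the ($\Rightarrow$) direction, starting from nonunits $a,b,c \in A$ with $abc \in I$, the elements $(a,0),(b,0),(c,0)$ are nonunits of $R$ whose product $(abc,0)$ lies in $I\propto E$, and the $OA$-property of $I\propto E$ produces either $ab \in I$ or $c \in I$. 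For the ($\Leftarrow$) direction, if $(a_1,e_1),(a_2,e_2),(a_3,e_3)$ are nonunits of $R$ whose product lies in $I\propto E$, then the $a_i$ are nonunits with $a_1a_2a_3 \in I$, so applying the $OA$-property of $I$ and then putting back the module components yields either $(a_1,e_1)(a_2,e_2) \in I\propto E$ or $(a_3,e_3) \in I\propto E$.

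For part (2), the strategy is to show that every $OA$-ideal $I$ of $R$ automatically contains the submodule $\boldsymbol{0}\propto E$, and then to set $L = \{a \in A \mid (a,0) \in I\}$. This $L$ is clearly a proper ideal of $A$, and the containment $\boldsymbol{0}\propto E \subseteq I$ gives at once that $I = L\propto E$ (because $(a,e) = (a,0) + (0,e)$). Part (1) then promotes $L$ to an $OA$-ideal, finishing the classification.

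The crux is therefore the containment $\boldsymbol{0}\propto E \subseteq I$, and this is where Lemma~\ref{lem1} does the work by splitting into two cases. If $I$ is a prime ideal of $R$, then since $(0,e)^2 = 0$ for every $e \in E$, one has $\boldsymbol{0}\propto E \subseteq \sqrt{\boldsymbol{0}} \subseteq I$. Otherwise, by Lemma~\ref{lem1}(1)+(2), $R$ is local with maximal ideal $\mathfrak{m}$ and $\mathfrak{m}^2 \subseteq I$; since primes of $R$ correspond bijectively to primes of $A$ via the surjection $R \to R/(\boldsymbol{0}\propto E) \cong A$, the ring $A$ is also local, with maximal ideal $M$ say, and $\mathfrak{m} = M\propto E$, whence $\mathfrak{m}^2 = M^2\propto ME$. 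Here is where the two hypotheses on $A$ and $E$ enter: the nonunit regular element $r$ of $A$ lies in $M$, and divisibility of $E$ gives $E = rE \subseteq ME$, so $\boldsymbol{0}\propto E \subseteq \boldsymbol{0}\propto ME \subseteq \mathfrak{m}^2 \subseteq I$. The main obstacle I anticipate is precisely this last case, where one must recognize why divisibility together with the existence of a nonunit regular element forces $E \subseteq ME$; without both assumptions the containment can fail and $OA$-ideals of $R$ of the form $L\propto F$ with $F \subsetneq E$ might appear.
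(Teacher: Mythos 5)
Your proposal is correct, but it reaches part (2) by a different route than the paper. For (1) the paper simply cites an external result of Yassine et al., whereas you verify the equivalence directly from the unit criterion $(a,e)\in R$ is a unit iff $a$ is a unit of $A$; this is a sound, self-contained substitute. For (2) the paper's proof is a single absorbing trick: writing $e=af$ by divisibility (with $a$ the nonunit regular element), the product $(a,0)(0,f)(0,e)=(0,0)$ lies in the $OA$-ideal $J$, and since $(a,0)(0,f)=(0,e)$, the $OA$-property forces $(0,e)\in J$ in either alternative, giving $\boldsymbol{0}\propto E\subseteq J$ uniformly, with no case distinction and no structure theory; it then quotes Anderson--Winders to conclude $J=L\propto E$. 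You instead invoke the internal classification of $OA$-ideals (Lemma~\ref{lem1}): in the prime case $\boldsymbol{0}\propto E\subseteq\sqrt{\boldsymbol{0}}\subseteq I$, and in the non-prime case $R$ is local, $\mathfrak{m}=M\propto E$, $\mathfrak{m}^2=M^2\propto ME\subseteq I$, and $E=rE\subseteq ME$ by divisibility of $E$ and $r\in M$. Both arguments are valid and use exactly the two hypotheses; the paper's is shorter and more elementary, while yours makes visible where the hypotheses actually bite (only in the non-prime case) and avoids the external reference for (1). Your final identification $I=L\propto E$ with $L=\{a\in A\mid (a,0)\in I\}$ matches the paper's concluding step.
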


\begin{proof} (1) This follows immediately from \cite[Theorem 2.20]{YNN}.

(2) Let $J$ be an $OA$-ideal of $R$. Our aim is to show that $\boldsymbol{0}\propto E\subseteq J$. Let $e\in E$ and let $a\in A$ be a nonunit regular element. Then $e=af$ for some $f\in E$ and thus $(a,0)(0,f)(0,e)=(0,0)\in J$. Since $J$ is an $OA$-ideal, we conclude that $(a,0)(0,f)=(0,e)\in J$ or $(0,e)\in J$ which implies that $\boldsymbol{0}\propto E\subseteq J$. Therefore, $J=L\propto E$ with $L=\{b\in A\mid (b,g)\in J$ for some $g\in E\}$ and $L$ is an ideal of $A$ by \cite[Theorems 3.1 and 3.3(1)]{AW}. Now the result follows from (1).
\end{proof}

\begin{corollary}\label{cor5} Let $A$ be an integral domain that is not a field, $E$ be a divisible $A$-module and $R=A\propto E$. Then the $OA$-ideals of $R$ have the form $I\propto E$ where $I$ is an $OA$-ideal of $A$.
\end{corollary}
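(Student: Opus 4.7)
The plan is to deduce this as an immediate consequence of Lemma~\ref{lem6}(2), after checking that its hypothesis is satisfied. The lemma requires both that $E$ be a divisible $A$-module (which is assumed) and that $A$ contain a nonunit regular element. Since $A$ is an integral domain, every nonzero element of $A$ is regular, and since $A$ is not a field, there exists some nonzero $a \in A$ that is a nonunit. This element $a$ serves as the required nonunit regular element.

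With the hypothesis verified, Lemma~\ref{lem6}(2) gives directly that every $OA$-ideal of $R$ has the form $L \propto E$ for some $OA$-ideal $L$ of $A$. For the converse direction of the claim (that every such $L \propto E$ is indeed an $OA$-ideal of $R$, so the form is not just necessary but sufficient), one appeals to Lemma~\ref{lem6}(1). There is no genuine obstacle here: the role of the corollary is simply to repackage the lemma in the convenient setting of integral domains, where divisibility of $E$ automatically yields the required regular nonunit. The only thing to double-check is that ``not a field'' ensures the existence of a nonzero nonunit, which is immediate from the standard fact that a domain is a field precisely when every nonzero element is a unit.
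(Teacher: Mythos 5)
Your proposal is correct and is exactly the paper's intended argument: the corollary is a direct application of Lemma~\ref{lem6}(2), since an integral domain that is not a field has a nonzero nonunit, which is automatically regular, and divisibility of $E$ is assumed. The extra remark invoking Lemma~\ref{lem6}(1) for the converse containment is consistent with how the lemma is used and introduces no gap.
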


Next, we study the transfer of the $OAF$-ring property to the trivial ring extension.

\begin{theorem}\label{thm5} Let $A$ be a ring with Jacobson radical $M$, $E$ be an $A$-module and $R=A\propto E$.
\begin{enumerate}
\item[\textnormal{(1)}] $R$ is an $OAF$-ring if and only if one of the following conditions is satisfied.
\begin{enumerate}
\item[\textnormal{(A)}] $A$ is a general $ZPI$-ring, $E$ is cyclic and the annihilator of $E$ is a $($possibly empty$)$ product of idempotent maximal ideals of $A$.
\item[\textnormal{(B)}] $A$ is local, $M^2$ is divided, $E=\boldsymbol{0}$ and either $M$ is nilpotent or $A$ is a domain with $\bigcap_{n\in\mathbb{N}} M^n=\boldsymbol{0}$.
\item[\textnormal{(C)}] $A$ is local, $M^2=\boldsymbol{0}$, $ME=aE$ for each nonzero $a\in M$ and $ME=Mx$ for each $x\in E\setminus ME$.
\end{enumerate}
In particular, if $R$ is an $OAF$-ring, then $A$ is an $OAF$-ring.
\item[\textnormal{(2)}] Every proper ideal of $R$ is an $OA$-ideal if and only if $A$ is local, $M^2=\boldsymbol{0}$ and $ME=\boldsymbol{0}$.
\end{enumerate}
\end{theorem}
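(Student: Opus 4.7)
The whole theorem is built on two earlier results: Theorem~\ref{thm4}, which classifies $OAF$-rings via three structural cases, and Proposition~\ref{prop6}, which characterizes when every proper ideal is an $OA$-ideal. Three basic facts about $R=A\propto E$ will be used repeatedly: (i) since the elements of $\boldsymbol{0}\propto E$ are nilpotent, the prime ideals of $R$ are precisely $P\propto E$ for $P\in\mathrm{Spec}(A)$, so $R$ is local iff $A$ is local (with maximal ideal $M\propto E$); (ii) $R$ is a domain iff $A$ is a domain and $E=\boldsymbol{0}$; (iii) $(I\propto E)^n=I^n\propto I^{n-1}E$ for every ideal $I$ of $A$, so in particular $(M\propto E)^2=M^2\propto ME$.

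Part~(2) is immediate from Proposition~\ref{prop6}: every proper ideal of $R$ is an $OA$-ideal iff $R$ is local with $(M\propto E)^2=\boldsymbol{0}$, which by (i) and (iii) is exactly $A$ local, $M^2=\boldsymbol{0}$, and $ME=\boldsymbol{0}$.

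For part~(1), I apply Theorem~\ref{thm4} to $R$. Forward: if $R$ is a general $ZPI$-ring I invoke the known structural description of trivial extensions that are general $ZPI$-rings to get~(A). If $R$ is a local domain (Theorem~\ref{thm4}(B)) then $E=\boldsymbol{0}$ by (ii), so $R\cong A$ and the conditions give~(B). If $R$ satisfies Theorem~\ref{thm4}(C), i.e.\ $R$ is local with $M\propto E$ nilpotent and $(M\propto E)^2$ divided, the case $E=\boldsymbol{0}$ again gives~(B); for $E\neq\boldsymbol{0}$ the pivotal step is to show $M^2=\boldsymbol{0}$. I argue by contradiction: assuming $M^2\neq\boldsymbol{0}$, divided-ness of $(M\propto E)^2=M^2\propto ME$ against the principal ideal $(0,x)R=\boldsymbol{0}\propto xA$ for some nonzero $x\in E$ rules out the inclusion $M^2\propto ME\subseteq\boldsymbol{0}\propto xA$ by a first-coordinate obstruction, so $\boldsymbol{0}\propto xA\subseteq M^2\propto ME$; this forces $x\in ME$, hence $E=ME$, and combined with nilpotency of $M\propto E$ (which implies $M^n=\boldsymbol{0}$ for some $n$) this yields $E=M^nE=\boldsymbol{0}$, a contradiction. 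Once $M^2=\boldsymbol{0}$, comparing $\boldsymbol{0}\propto ME$ with $(a,0)R=aA\propto aE$ for nonzero $a\in M$ forces $ME=aE$, and comparing with $(0,x)R=\boldsymbol{0}\propto xA$ for $x\in E\setminus ME$ forces $ME\subseteq xA$; the localness of $A$ then upgrades this to $ME=Mx$ (if $y=xa\in ME$ with $a\notin M$ then $a$ is a unit and $x=ya^{-1}\in ME$, contradicting $x\notin ME$). This yields~(C).

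The converse direction is more routine. In~(B), $R\cong A$ satisfies Theorem~\ref{thm4}(B) or~(C) directly. In~(C), one verifies that $\boldsymbol{0}\propto ME$ is divided in $R$ by testing comparability with each principal ideal $(a,e)R=aA\propto(aE+eA)$, splitting on whether $a$ is a unit, $a\in M\setminus\boldsymbol{0}$, or $a=0$ (with a sub-split on $e\in ME$ vs.\ $e\in E\setminus ME$ in the last case); each sub-case is resolved by one of the two hypotheses on~$E$, and nilpotency of $M\propto E$ is automatic since $(M\propto E)^3=M^3\propto M^2E=\boldsymbol{0}$. Theorem~\ref{thm4} then gives that $R$ is $OAF$. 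The ``in particular'' clause is immediate: $A$ is general $ZPI$ in~(A), satisfies a case of Theorem~\ref{thm4} in~(B), and has $M^2=\boldsymbol{0}$ in~(C), whence Proposition~\ref{prop6} makes $A$ an $OAF$-ring. The main obstacle is case~(A): it requires a self-contained or cited structural description of when $A\propto E$ is a general $ZPI$-ring, and the condition that the annihilator of $E$ be a product of idempotent maximal ideals of $A$ captures exactly when $R$ fits into the Dedekind-domain-plus-$SPIR$ decomposition of general $ZPI$-rings.
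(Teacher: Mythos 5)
Your proposal follows the paper's own route almost verbatim: part (2) via Proposition~\ref{prop6}, and part (1) by applying Theorem~\ref{thm4} to $R$, quoting the Anderson--Winders description of trivial extensions that are general $ZPI$-rings for case (A), and playing the dividedness of $N^2=(M\propto E)^2=M^2\propto ME$ against suitable principal ideals to extract $M^2=\boldsymbol{0}$, $ME=aE$ and $ME=Mx$. Your small variations (deriving $M^2=\boldsymbol{0}$ by contradiction through the ideals $(0,x)R$ rather than comparing $N^2$ with $\boldsymbol{0}\propto E$ as the paper does, and getting the ``in particular'' clause from Proposition~\ref{prop6} instead of Theorem~\ref{thm4}) are immaterial.

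One concrete step in your converse for case (C) would fail as written: the identity $(a,e)R=aA\propto(aE+eA)$ is false. In general one only has $(a,e)R\subseteq aA\propto(aE+Ae)$; for instance, with $A=\mathbb{Z}/4\mathbb{Z}$ and $E=A$ (the setting of Example~\ref{ex1}) one computes $(2,1)R=\{(0,0),(0,2),(2,1),(2,3)\}$, which is strictly smaller than $2A\propto E$. Because the containment goes the wrong way, checking that $\boldsymbol{0}\propto ME$ is comparable to $aA\propto(aE+eA)$ does not establish comparability with $(a,e)R$, so your dividedness verification is not conclusive in the form you describe. The repair is exactly the paper's element-level argument: for $(a,e)\notin N^2$ and $y\in ME$, exhibit the membership $(0,y)\in(a,e)R$ directly, namely $(0,y)=(a,e)(0,v)$ with $y=av$ when $a\in M\setminus\boldsymbol{0}$ (using $ME=aE$), and $(0,y)=(0,e)(b,0)$ with $y=be$, $b\in M$, when $a=0$ and necessarily $e\in E\setminus ME$ (using $ME=Me$); the cases where $a$ is a unit or $(a,e)\in N^2$ are trivial. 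With that substitution your argument coincides with the paper's proof.
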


\begin{proof} (1) ($\Rightarrow$) First let $R$ be an $OAF$-ring. By Theorem~\ref{thm4}, it follows that (a) $R$ is a general $ZPI$-ring or (b) $R$ is local with maximal ideal $N$, $N^2$ is divided and ($N$ is nilpotent or $R$ is a domain such that $\bigcap_{n\in\mathbb{N}} N^n=\boldsymbol{0}$). If $R$ is a general $ZPI$-ring, then condition (A) is satisfied by \cite[Theorem 4.10]{AW}.

From now on let $R$ be local with maximal ideal $N$ such that $N^2$ is divided. Observe that $A$ is local with maximal ideal $M$ and $N=M\propto E$ by \cite[Theorem 3.2(1)]{AW}. If $R$ is a domain such that $\bigcap_{n\in\mathbb{N}} N^n=\boldsymbol{0}$, then $E=\boldsymbol{0}$ (for if $z\in E$ is nonzero, then $(0,z)$ is a nonzero zero-divisor of $R$), and hence $A\cong R$ is a domain, $M^2$ is divided and $\bigcap_{n\in\mathbb{N}} M^n=\boldsymbol{0}$.

Now let $N$ be nilpotent. If $E=\boldsymbol{0}$, then $A\cong R$, and thus $M^2$ is divided and $M$ is nilpotent. From now on let $E$ be nonzero. There is some $k\in\mathbb{N}$ such that $N^k=\boldsymbol{0}$. Note that $N^2=M^2\propto ME$ and $N^k=M^k\propto M^{k-1}E$, and thus $M^k=\boldsymbol{0}$. Since $N^2$ is divided, we have that $\boldsymbol{0}\propto E\subseteq N^2$ or $N^2\subseteq\boldsymbol{0}\propto E$. If $\boldsymbol{0}\propto E\subseteq N^2$, then $E=ME$, and hence $E=M^kE=\boldsymbol{0}$, a contradiction. Therefore, $N^2\subseteq\boldsymbol{0}\propto E$, which implies that $M^2=\boldsymbol{0}$.

Let $a\in M$ be nonzero. Then $(a,0)\not\in N^2$, and hence $N^2\subseteq (a,0)R=aA\propto aE$. Consequently, $ME\subseteq aE$, and thus $ME=aE$. Finally, let $x\in E\setminus ME$. Then $(0,x)\not\in N^2$. We infer that $N^2\subseteq (0,x)R=\boldsymbol{0}\propto Ax$. This implies that $ME\subseteq Ax$. If $ME\nsubseteq Mx$, then $bx\in ME$ for some unit $b\in A$, and hence $x\in ME$, a contradiction. It follows that $ME\subseteq Mx$, which clearly implies that $ME=Mx$.

($\Leftarrow$) Next we prove the converse. If condition (A) is satisfied, then $R$ is a general $ZPI$-ring by \cite[Theorem 4.10]{AW}, and thus $R$ is an $OAF$-ring. If condition (B) is satisfied, then $A$ is an $OAF$-ring by Theorem~\ref{thm4}, and hence $R\cong A$ is an $OAF$-ring. Now let condition (C) be satisfied. Set $N=M\propto E$. Then $R$ is local with maximal ideal $N$ by \cite[Theorem 3.2(1)]{AW}. By Theorem~\ref{thm4}, it suffices to show that $N$ is nilpotent and $N^2$ is divided. Since $M^2=\boldsymbol{0}$, we obtain that $N^3=M^3\propto M^2E=\boldsymbol{0}$, and thus $N$ is nilpotent. It remains to show that $N^2\subseteq (a,x)R$ for each $(a,x)\in R\setminus N^2$. Let $a\in A$ and $x\in E$ be such that $(a,x)\not\in N^2$. Since $N^2=\boldsymbol{0}\propto ME$, we have to show that $\boldsymbol{0}\propto ME\subseteq (a,x)R$. If $a$ is a unit of $A$, then $(a,x)$ is a unit of $R$ by \cite[Theorem 3.7]{AW} and the statement is clearly true. Let $z\in\boldsymbol{0}\propto ME$. Then $z=(0,y)$ for some $y\in ME$.

\textsc{Case} 1: $a$ is a nonzero nonunit. Since $ME=aE$, there is some $v\in E$ such that $y=av$. Observe that $z=(0,av)=(a,x)(0,v)\in (a,x)R$.

\textsc{Case} 2: $a=0$. Then $x\in E\setminus ME$ (since $(a,x)\not\in N^2$). Since $ME=Mx$, there is some $b\in M$ such that $y=bx$. It follows that $z=(0,bx)=(a,x)(b,0)\in (a,x)R$.

The in particular statement now follows from Theorem~\ref{thm4}.

(2) First let every proper ideal of $R$ be an $OA$-ideal. By Proposition~\ref{prop6}, we have that $R$ is local with maximal ideal $N$ and $N^2=\boldsymbol{0}$. It follows that $A$ is local with maximal ideal $M$ and $N=M\propto E$ by \cite[Theorem 3.2(1)]{AW}. Moreover, $\boldsymbol{0}=N^2=M^2\propto ME$, and hence $M^2=\boldsymbol{0}$ and $ME=\boldsymbol{0}$.

Conversely, let $A$ be local, $M^2=\boldsymbol{0}$ and $ME=\boldsymbol{0}$. Set $N=M\propto E$. Then $R$ is local with maximal ideal $N$ by \cite[Theorem 3.2(1)]{AW} and $N^2=M^2\propto ME=\boldsymbol{0}$. We infer by Proposition~\ref{prop6} that each proper ideal of $R$ is an $OA$-ideal.
\end{proof}

\begin{corollary}\label{cor6} Let $A$ be an integral domain, $E$ be a nonzero $A$-module and $R=A\propto E$. The following statements are equivalent.
\begin{enumerate}
\item[\textnormal{(1)}] $R$ is an $OAF$-ring.
\item[\textnormal{(2)}] $A$ is a field.
\item[\textnormal{(3)}] Every proper ideal of $R$ is an $OA$-ideal.
\end{enumerate}
\end{corollary}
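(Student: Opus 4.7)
The plan is to prove the chain of implications (3)$\Rightarrow$(1)$\Rightarrow$(2)$\Rightarrow$(3), using Theorem~\ref{thm5} as the engine for the nontrivial implication.

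First I would dispatch the two easy directions. For (3)$\Rightarrow$(1), note that if every proper ideal of $R$ is an $OA$-ideal, then every proper ideal is trivially its own $OA$-factorization of length one, so $R$ is an $OAF$-ring. For (2)$\Rightarrow$(3), if $A$ is a field then $A$ is local with maximal ideal $M=\boldsymbol{0}$, so $M^2=\boldsymbol{0}$ and $ME=\boldsymbol{0}$ hold vacuously; Theorem~\ref{thm5}(2) then yields that every proper ideal of $R$ is an $OA$-ideal.

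The substance is (1)$\Rightarrow$(2), and I would do this by eliminating two of the three cases (A), (B), (C) of Theorem~\ref{thm5}(1) using the hypotheses that $A$ is a domain and $E$ is nonzero. Case (B) says $E=\boldsymbol{0}$, which directly contradicts $E\neq\boldsymbol{0}$. Case (C) gives $A$ local with $M^2=\boldsymbol{0}$; since $A$ is a domain, any $a\in M$ satisfies $a^2\in M^2=\boldsymbol{0}$, hence $a=0$. Thus $M=\boldsymbol{0}$ and $A$ is a field. This leaves case (A), which is the one place that needs a real argument.

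In case (A), $A$ is a general $ZPI$-ring that is a domain, hence a Dedekind domain, and the annihilator $\mathrm{ann}(E)$ is a (possibly empty) product of idempotent maximal ideals of $A$. The key observation is that in a Dedekind domain a nonzero maximal ideal $M$ is invertible, so $M^2=M$ would force $M=A$, a contradiction; hence the only possible idempotent maximal ideal is the zero ideal, which occurs only when $A$ is a field. If $A$ were not a field, there would be no idempotent maximal ideals, forcing $\mathrm{ann}(E)$ to be the empty product $A$, which gives $E=\boldsymbol{0}$, contradicting the hypothesis. Therefore $A$ must be a field, completing the implication.

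The main (and essentially only) obstacle is the analysis of case (A): one must remember that in a Dedekind domain idempotent nonzero maximal ideals do not exist, and combine this with the non-vanishing of $E$ to squeeze out that $A$ is a field. Everything else is bookkeeping via Theorems~\ref{thm5}(1) and~\ref{thm5}(2).
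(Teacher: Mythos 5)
Your proposal is correct and follows essentially the same route as the paper: both reduce (1)$\Rightarrow$(2) to the case analysis of Theorem~\ref{thm5}(1), discard (B) since $E\neq\boldsymbol{0}$ and (C) since a domain with $M^2=\boldsymbol{0}$ is a field, and settle (A) by noting that a Dedekind domain has no nonzero idempotent proper (maximal) ideals, so the nonzero module $E$ forces the zero ideal to be maximal; the remaining implications are handled via Theorem~\ref{thm5}(2) exactly as in the paper. Your contrapositive phrasing of the Dedekind step is an immaterial variation of the paper's argument.
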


\begin{proof} (1) $\Rightarrow$ (2): It follows from Theorem~\ref{thm5}(1) that $A$ is a general $ZPI$-ring and the annihilator of $E$ is a product of idempotent maximal ideals of $A$ or that $A$ is local with maximal ideal $M$ such that $M^2=\boldsymbol{0}$.

First let $A$ be a general $ZPI$-ring such that the annihilator of $E$ is a product of idempotent maximal ideals of $A$. Note that $A$ is a Dedekind domain, and thus the only proper idempotent ideal of $A$ is the zero ideal. Since $E$ is nonzero, the annihilator of $E$ is a proper ideal of $A$, and hence $A$ must possess an idempotent maximal ideal. We infer that the zero ideal is a maximal ideal of $A$, and thus $A$ is a field.

Now let $A$ be local with maximal ideal $M$ such that $M^2=\boldsymbol{0}$. Since $A$ is an integral domain, it follows that $M=\boldsymbol{0}$, and hence $A$ is a field.

(2) $\Rightarrow$ (3): Set $M=\boldsymbol{0}$. Then $A$ is local with maximal ideal $M$, $M^2=\boldsymbol{0}$ and $ME=\boldsymbol{0}$. Now the statement follows from Theorem~\ref{thm5}(2).

(3) $\Rightarrow$ (1): This is obvious.
\end{proof}

\begin{remark}\label{rem2} In general, if $A$ is an $OAF$-ring and $E$ is an $A$-module, then $A\propto E$ need not be an $OAF$-ring. Indeed, let $A$ be an $OAF$-domain that is not a field and let $E$ be a nonzero $A$-module. By Corollary~\ref{cor6}, $A\propto E$ is not an $OAF$-ring.
\end{remark}

\begin{corollary}\label{cor7} Let $A$ be a local ring with maximal ideal $M$ and $E$ be a nonzero $A$-module such that $ME=\boldsymbol{0}$. Set $R=A\propto E$. The following statements are equivalent.
\begin{enumerate}
\item[\textnormal{(1)}] $R$ is an $OAF$-ring.
\item[\textnormal{(2)}] $M^2=\boldsymbol{0}$.
\item[\textnormal{(3)}] Every proper ideal of $R$ is an $OA$-ideal.
\end{enumerate}
\end{corollary}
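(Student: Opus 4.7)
The plan is to prove the chain $(1)\Rightarrow(2)\Rightarrow(3)\Rightarrow(1)$, leaning heavily on Theorem~\ref{thm5} and Proposition~\ref{prop6}, which have already packaged most of the content. The implication $(3)\Rightarrow(1)$ is immediate, since every proper ideal being an $OA$-ideal yields a trivial length-one $OA$-factorization of every proper ideal. For $(2)\Rightarrow(3)$, the hypotheses of the corollary already guarantee that $A$ is local and $ME=\boldsymbol{0}$; adding $M^2=\boldsymbol{0}$ from (2) puts us exactly inside the right-hand side of Theorem~\ref{thm5}(2), which directly delivers that every proper ideal of $R=A\propto E$ is an $OA$-ideal.

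The heart of the argument is $(1)\Rightarrow(2)$. I would invoke Theorem~\ref{thm5}(1) and run through its three alternatives (A), (B), (C), discarding or reducing each one. Case (B) explicitly requires $E=\boldsymbol{0}$, which contradicts the standing assumption $E\neq\boldsymbol{0}$ and is therefore excluded. Case (C) already includes $M^2=\boldsymbol{0}$ as one of its clauses, so there is nothing to prove. Hence only case (A) requires genuine work: we then have $A$ a general $ZPI$-ring, $E$ cyclic, and $\operatorname{Ann}(E)$ a (possibly empty) product of idempotent maximal ideals of $A$.

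To finish case (A), I would use the classical structure theorem: a local general $ZPI$-ring is a field, a $DVR$, or a special principal ideal ring, so in particular $M$ is finitely generated. By Nakayama's lemma, a finitely generated idempotent ideal is zero, so the only candidate for an idempotent maximal ideal in $A$ is the zero ideal, which occurs only when $A$ is a field. Since $E\neq\boldsymbol{0}$, the annihilator of $E$ is a proper ideal, which forces the product describing $\operatorname{Ann}(E)$ to be non-empty (an empty product would give $\operatorname{Ann}(E)=A$). Thus at least one idempotent maximal ideal factor exists, which forces $A$ to be a field, whence $M=\boldsymbol{0}$ and therefore $M^2=\boldsymbol{0}$, as required.

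The only step I expect to require any care is the disposal of case (A); the rest reduces to citing Theorem~\ref{thm5} and unwinding definitions. Within case (A), the subtle point is noting the empty-product convention (so that the empty product forces $\operatorname{Ann}(E)=A$ and is ruled out by $E\neq\boldsymbol{0}$), after which the Nakayama-based observation that nonzero finitely generated ideals in a local ring cannot be idempotent closes the argument.
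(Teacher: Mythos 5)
Your proposal is correct and follows essentially the same route as the paper: the easy implications come from Theorem~\ref{thm5}, and $(1)\Rightarrow(2)$ is settled by running through the three alternatives of Theorem~\ref{thm5}(1), where in case (A) the nonemptiness of the product describing $\operatorname{Ann}(E)$ (forced by $E\neq\boldsymbol{0}$) yields an idempotent maximal ideal, necessarily $M$, which must then be zero. The only cosmetic difference is the finish of case (A): the paper cites the Dedekind/special-primary dichotomy from Larsen--McCarthy, while you use Nakayama on the finitely generated $M$ of a local general $ZPI$-ring --- the same argument in substance.
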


\begin{proof} (1) $\Rightarrow$ (2): Assume that $M^2\not=\boldsymbol{0}$. By Theorem~\ref{thm5}(1), $A$ is a local general $ZPI$-ring and $M$ is idempotent (since the annihilator of $E$ is a nonempty product of idempotent maximal ideals of $A$ and $M$ is the only maximal ideal of $A$). We infer by \cite[Corollary 9.11]{LM} that $A$ is a Dedekind domain or each proper ideal of $A$ is a power of $M$ (because local rings are indecomposable). If $A$ is a Dedekind domain, then clearly $M^2=M=\boldsymbol{0}$ (since $M$ is idempotent and a Dedekind domain has no nonzero proper idempotent ideals). Moreover, if every proper ideal of $A$ is a power of $M$, then again $M^2=M=\boldsymbol{0}$ (since $M$ is idempotent). In any case, we obtain that $M^2=\boldsymbol{0}$, a contradiction.

(1) $\Leftarrow$ (2) $\Leftrightarrow$ (3): This follows from Theorem~\ref{thm5}.
\end{proof}

\begin{example}\label{ex1} Let $A$ be a local principal ideal ring with maximal ideal $M$ such that $A$ is not a field and $M^2=\boldsymbol{0}$ (e.g. $A=\mathbb{Z}/4\mathbb{Z}$). Set $R=A\propto A$. Then $R$ is an $OAF$-ring, and yet not every proper ideal of $R$ is an $OA$-ideal.
\end{example}

\begin{proof} Since $M\not=\boldsymbol{0}$, it follows from Theorem~\ref{thm5}(2) that not every proper ideal of $R$ is an $OA$-ideal. By Theorem~\ref{thm5}(1) it remains to show that $M=aA$ for each nonzero $a\in M$ and $M=Mx$ for each $x\in A\setminus M$. Note that $M=zA$ for some $z\in M$. If $a\in M$ is nonzero, then $a=zb$ for some $b\in A$. Clearly, $b\not\in M$, and thus $b$ is a unit of $A$, which clearly implies that $M=zA=aA$. Finally, if $x\in A\setminus M$, then $x$ is a unit of $A$, and thus $M=Mx$.
\end{proof}

\begin{remark}\label{rem3} Let $A$ be a ring with Jacobson radical $M$, $E$ be an $A$-module and $R=A\propto E$. Then each proper principal ideal of $R$ has an $OA$-factorization if and only if one of the following conditions is satisfied.
\begin{enumerate}
\item[\textnormal{(1)}] $A$ is a $\pi$-ring, $E$ is cyclic and the annihilator of $E$ is a $($possibly empty$)$ product of idempotent maximal ideals of $A$.
\item[\textnormal{(2)}] $A$ is local, $M^2$ is divided, $E=\boldsymbol{0}$ and either $M$ is nilpotent or $A$ is a domain with $\bigcap_{n\in\mathbb{N}} M^n=\boldsymbol{0}$.
\item[\textnormal{(3)}] $A$ is local, $M^2=\boldsymbol{0}$, $ME=aE$ for each nonzero $a\in M$ and $ME=Mx$ for each $x\in E\setminus ME$.
\end{enumerate}
\end{remark}

\begin{proof} This can be proved along similar lines as Theorem~\ref{thm5}(1) by using Corollary~\ref{cor3} and \cite[Theorems 3.2(1) and 4.10]{AW}.
\end{proof}

\bigskip
\noindent {\bf ACKNOWLEDGEMENTS.} We want to thank the referee for many helpful suggestions and comments which improved the quality of this paper. The fourth-named author was supported by the Austrian Science Fund FWF, Project Number J4023-N35. The first three authors dedicate this work to their Professor Faycal Lamrini for his retirement.

\end{document}